\newcommand{\norm}[1]{\left\|#1\right\|}
\newcommand{\interior}{\mathrm{int}\,}
\newcommand{\inProd}[2]{\langle #1 , #2 \rangle }
\newcommand{\feasPs}{\mathcal{F}^{\stdMap}_{\text{scaled}}}
\newcommand{\PSDcone}[1]{{\mathcal{S}^{#1}_+}}
\newcommand{\stdMap}{ {\mathcal{A}}}
\newcommand{\stdCone}{ {\mathcal{K}}}
\newcommand{\stdInt}{ {e}}
\newcommand{\T}{\top\hspace{-1pt}}               
\newcommand{\Vol}{{\mathrm{ vol } \,}}    
\newcommand{\jAlg}{\mathcal{E}}
\renewcommand{\Re}{\mathbb{R}}
\newcommand{\const}{\frac{1}{\rho r}}    
\newcommand{\constInv}{\rho r} 
\newcommand{\SOC}[1]{\mathcal{L}_{#1}}
\newcommand{\jProd}[2]{ {#1 \circ #2 } }
\newcommand{\eig}{\lambda}
\newcommand{\Qr}[1]{Q_{#1}}
\newcommand{\SC}{1}
\DeclarePairedDelimiter\abs{\lvert}{\rvert}%
\renewcommand{\S}{\mathcal{S}}                    
\newcommand{\tr}{\mathrm{tr}\,}    
\newcommand{\RNum}[1]{\uppercase\expandafter{\romannumeral #1\relax}}
\newtheorem{definition}{Definition}
\newtheorem{lemma}[definition]{Lemma}
\newtheorem{proposition}[definition]{Proposition}
\newtheorem{corollary}[definition]{Corollary}
\newtheorem{theorem}[definition]{Theorem}
\title{An extension of Chubanov's algorithm to symmetric cones}
\date{December 2016 (Revised: September 2017)}
\author{Bruno F. Louren\c{c}o
	\thanks{Department of Computer and Information Science, Faculty of Science and Technology, Seikei
		University, 3-3-1 Kichijojikitamachi, Musashino-shi, Tokyo 180-8633, Japan.
		(Email: lourenco@st.seikei.ac.jp)}
	\and
	Tomonari Kitahara\thanks{
		Graduate School of Decision Science and Technology, Tokyo Institute of Technology,
		2-12-1-W9-62, Ookayama, Meguro-ku, Tokyo 152-8552, Japan. 
		(E-mail: kitahara.t.ab@m.titech.ac.jp)
		}
	\and
	Masakazu Muramatsu\thanks{
		Department of Computer Science, The University of Electro-Communications 1-5-1 Chofugaoka, Chofu-shi, Tokyo, 182-8585 Japan. (E-mail: muramatu@cs.uec.ac.jp)
	}
	\and       
	Takashi Tsuchiya
	\thanks{
		National Graduate Institute for Policy Studies 7-22-1 Roppongi, Minato-ku, Tokyo 106-8677, Japan. (E-mail: tsuchiya@grips.ac.jp)
	}
}
\begin{document}
\maketitle
\begin{abstract}
	In this work we present an extension of Chubanov's algorithm to 
	the case of homogeneous feasibility problems over a symmetric cone $\stdCone$. 
	As in Chubanov's method for linear feasibility problems, the algorithm consists of a \emph{basic procedure} and a step where the solutions are confined to the intersection of a half-space 
	and $\stdCone$. Following an earlier work by Kitahara and Tsuchiya on second order cone feasibility problems, progress is measured through the volumes of those 
	intersections: when they become sufficiently small, we know it is time to stop.
	We never have to explicitly compute the volumes, it is only necessary to keep 
	track of the reductions between iterations. We show this is enough  to obtain 
	concrete upper bounds to the minimum eigenvalues of a scaled version of the original 
	feasibility problem. Another distinguishing feature of our approach is the 
	usage of a spectral norm that takes into account the way that $\stdCone$ is decomposed 
	as simple cones. 
	In several key cases, including semidefinite programming and second order cone programming, these norms  make it possible to obtain better complexity bounds for the basic procedure 	when compared to a recent approach by Pe\~{n}a and Soheili. 
	Finally, in the appendix, we present a translation of the algorithm to the homogeneous feasibility problem in semidefinite programming.
	
	\noindent \textbf{Keywords:} symmetric cone,
	feasibility problem, Chubanov's method.
	%
	
\end{abstract}

\section{Introduction}\label{sec:int}
Consider the following feasibility problem:	
	\noindent\begin{align}
	{\mathrm{find}} & \quad x \tag{P} \label{eq:feas_p} \\ 
	\mbox{subject to} & \quad \stdMap x = 0  \notag \\
	& \quad x \in \interior \stdCone, \notag
	\end{align} 
where $\stdCone$ is a \emph{symmetric cone} in a finite dimensional 
space $\jAlg$ of dimension $d$,  $\interior \stdCone$ denotes the interior of $\stdCone$ and 
$\stdMap$ is a linear map. 
The problem \eqref{eq:feas_p} is a special case of \emph{symmetric cone programming} (SCPs), a broad framework 
that contains linear programming (LP), second order cone programming (SOCP) and semidefinite programming (SDP). In particular,  among the symmetric cones, we have the nonnegative orthant $\Re^n_+$, the second order cone (Lorentz cone) $\SOC{n}$, the positive semidefinite cone $\PSDcone{n}$ and any direct product of those.

Besides  providing an unified framework for several important classes of problems,
symmetric cones are very rich in structure, making it possible to use several 
useful linear algebraic concepts such as eigenvalues in far broader context.  
Typically,  symmetric cone programs are solved via interior point methods \cite{NN94} and 
the symmetric cone programming framework is robust enough to encompass primal dual interior 
point methods \cite{F97,M02,SA03}, which are the pinnacle of the research on interior point methods.

For the special case of linear programming, there are a few well-known algorithms such 
as the simplex method, the ellipsoid method \cite{KH80} and, of course, interior point 
methods. The last two were the only known methods that had polynomial 
time complexity, until a recent work by Chubanov \cite{Ch12,Ch15}. Chubanov's algorithm is 
a new polynomial time algorithm for linear feasibility problems (\emph{i.e.}, $\stdCone = \Re^n_+$)  
with promising computational performance. 
See also the improved version described by Roos \cite{Ro16} and a related work by by Li, Roos and Terlaky \cite{LRT15}.
At this point, it  seems that Chubanov's approach does not fit into previous families of methods.

Our initial motivation for this  work  was an attempt to generalize Chubanov's algorithm to  feasibility problems over symmetric cones, following the work by Kitahara and Tsuchiya \cite{KT16} for second order cone feasibility problems. 
The algorithm in \cite{KT16} works by finding a succession of half-spaces that confine the solutions of a scaled version of \eqref{eq:feas_p}. These half-spaces are found through a ``basic procedure'' and 
then they are adjusted to ensure that the volume of the intersection with $\stdCone$ is sufficiently small. The algorithm progresses by 
keeping track of the volume reductions and it either proves \eqref{eq:feas_p} is feasible/infeasible or proves that a scaled version of \eqref{eq:feas_p}, if feasible, only has solutions that are ``very close'' to the boundary of the cone $\stdCone$, where ``very close'' is adjustable via an 
$\epsilon$ parameter.  The scaled problem in \cite{KT16} makes use of the usual ``infinity-norm'' on $\Re^n$. Here we use what is, perhaps, the proper generalization to the Jordan algebraic setting: 
see the $\norm{\cdot}_{1,\infty}$ spectral norm in Section \ref{sec:norms}.

As in \cite{KT16}, the algorithm described here has some flavor of the ellipsoid method \cite{KH80} since we use volumes to measure progress. It also has some kinship to interior point methods and, in fact, a 
classical bound for self-concordant functions (Lemma \ref{lemma:log}) makes an appearance when we
prove complexity bounds, see also Theorems \ref{theo:step} and \ref{theo:vol2}. In both theorems, we make use of a self-concordant barrier for $\stdCone$ and we explicitly compute a function $\varphi$ that bounds the volume reduction. 
Furthermore, we seek the maximum possible volume reduction that our analysis permits.
These aspects of our discussion seem to be novel and we note that self-concordant functions have not appeared in previous works \cite{Ch15,PS16,KT16}.
We remark that Pe\~{n}a and Soheili \cite{PS16} also describe 
methods for solving \eqref{eq:feas_p} which are partly inspired by Chubanov's work. Later in 
this section, we will contrast their work with ours.

We now present some of the main ideas. Every symmetric cone can be written as a direct product of symmetric cones $\stdCone = \stdCone _1\times \cdots \times \stdCone _\ell$, 
where each $\stdCone _i$ is a \emph{simple} symmetric cone contained in some finite dimensional space $\jAlg _i$. We also have $\jAlg = \jAlg _1 \times \cdots \times \jAlg _\ell$. 
The dimension and rank of each $\jAlg _i$ are denoted by $d_i,r_i$, respectively, and we define $d = d_1 + \cdots + d_\ell$ and $r = r_1 + \cdots + r_\ell$.
See Section \ref{sec:pre} for a review on the necessary notions on symmetric cones. 
Associated to \eqref{eq:feas_p}, we 
have the problem 
	\noindent\begin{align}
	{\mathrm{find}} & \quad y,u \tag{D} \label{eq:feas_d} \\ 
	\mbox{subject to} & \quad y = \stdMap^\T u \notag \\
	& \quad y \in \stdCone, y \neq 0, \notag
	\end{align} 
Due to the Gordan-Stiemke's theorem (see Corollary 2 in Luo, Sturm and Zhang \cite{Luo97dualityresults}), \eqref{eq:feas_p} is feasible if and only if 
\eqref{eq:feas_d} is infeasible.

Since multiplying a solution of \eqref{eq:feas_p} by a positive constant gives 
rise to another solution of \eqref{eq:feas_p}, it makes sense 
to consider a scaled version such as follows:
\begin{align}
{\mathrm{find}} & \quad x \tag{P$_{\text{scaled}}^\stdMap$} \label{eq:feas_p_s} \\ 
\mbox{subject to} & \quad \stdMap x = 0 \notag \\
 & \quad \norm{x}_{1,\infty} \leq \SC \notag \\
& \quad x \in \interior \stdCone, \notag
\end{align} 
where $\norm{x}_{1,\infty}$ denotes the maximum of the ``$1$-spectral norms'' of the blocks of $x$. 
That is, we have $\norm{x}_{1,\infty} = \max\{ \norm{x_1}_{1},\ldots, \norm{x_\ell}_{1}\}$, where $\norm{x_i}_1$ is the sum of the absolute values of the eigenvalues of $x_i$. See Section \ref{sec:norms} for more details.
The usage of $\norm{\cdot}_{1,\infty}$ might seem arbitrary at first, but it is the key for obtaining good complexity bounds in one of the phases of our algorithm.

Now, suppose that $\epsilon > 0$ is given. We say that $x$ is an \emph{$\epsilon$-feasible} solution 
to \eqref{eq:feas_p_s} if $x$ is feasible for \eqref{eq:feas_p_s} and $\lambda _{\min}(x) \geq \epsilon$, where $\lambda _{\min}(x)$ is the minimum eigenvalue of $x$. The algorithm we describe here accomplishes one of the following three goals:
\begin{enumerate}
	\item to find a feasible solution to \eqref{eq:feas_p},
	\item to prove \eqref{eq:feas_p} is infeasible by finding a solution to \eqref{eq:feas_d},
	\item to prove that the minimum eigenvalue of any solution to \eqref{eq:feas_p_s} must be smaller than $\epsilon$.\footnote{ 
	As in the case of semidefinite matrices, an element of $x \in \interior \stdCone$ with small minimum eigenvalue is very close to the boundary of $\stdCone$. So, while item $3$ does not rule out 
	the possibility of \eqref{eq:feas_p} being infeasible, it shows that it is very close to being infeasible. 
}
\end{enumerate}

Denote the set of feasible solutions of \eqref{eq:feas_p_s} by $\feasPs$. Let 
$\hat x = \stdInt/r$, where $\stdInt$, $r$ are the identity element and the rank of $\jAlg$, respectively.
The  approach we describe here is based on the fact that if $\hat x$ is not feasible for \eqref{eq:feas_p}, we can find in a reasonable time through a so-called ``basic procedure'' a 
solution to either \eqref{eq:feas_p} or \eqref{eq:feas_d} or, failing that, for at least 
one of the blocks $\jAlg _k$ we obtain a half-space $H\subseteq \jAlg _k$ with very special properties. 

This half-space can be seen as a certificate that if $x \in \feasPs$, then $x_k$ must be close to the boundary of $\stdCone _k$. Furthermore, we get a concrete bound on $\lambda _{\min}(x_k)$, see Lemma \ref{lemma:stop}. 
This is analogous to the observation that in Chubanov's original algorithm, once the basic procedure finishes without finding solutions to either \eqref{eq:feas_p} or \eqref{eq:feas_d},  there is a least one coordinate $k$ for which we know for sure that $x_k \leq 1/2$ holds for all $x$ that are feasible for a scaled version of \eqref{eq:feas_p}, see the comments after Lemma 2.1 in \cite{Ch15}. 
In fact, when $\stdCone = \Re^n_+$, the constraint $\norm{x}_{1,\infty} \leq \SC $ in \eqref{eq:feas_p_s} forces 
$x$ to belong to the unit cube $[0,1]^n$, which is exactly the same type of scaling used in
\cite{Ch15}. Furthermore, when $\stdCone$ is a direct product of second order cones, if $x \in \stdCone$ then  $\norm{x}_{1,\infty}$ is twice the largest component of 
$x$ and thus become the usual (non-spectral) ``infinity-norm'' times a positive constant, which is the same norm 
used in \cite{KT16} up to a positive constant.

More concretely, after the basic procedure (Algorithm \ref{alg:basic}) ends, we obtain at least one index $k$ together with elements $w_k,v_k \in \jAlg _k$ which define  a half-space  $H(w_k,v_k) = \{x_k \in \jAlg _k \mid \inProd{x_k}{w_k} \leq \inProd{w_k}{v_k}\}$ such that the following three key properties hold.
\begin{enumerate}[label=({P.\arabic*})]
\item If $x \in \feasPs$, then $x_k \in   H(w_k,v_k)$.\label{p:1}
\item There is a linear bijection $Q = Q_1 \oplus \cdots \oplus Q_\ell$ such that $Q(\stdCone) = \stdCone $ and $Q_k (H(\stdInt _k,\stdInt_k /r_k))  = H(w_k,v_k)$.\label{p:2}
\item The volume $H(w_k,v_k) \cap \stdCone _k$ is less than the volume of $H(\stdInt_k,\stdInt _k/r_k )\cap  \stdCone _k$ and their ratio is 
smaller than a positive constant $\delta < 1$.\label{p:3}
\end{enumerate}
Once $Q$ is found, we may consider the problem
\begin{align}
  {\mathrm{find}} & \quad x \tag{P$_{\text{scaled}}^{\stdMap Q}$}  \label{eq:feas_p_s2} \\ 
  \mbox{subject to} & \quad \stdMap Q x = 0 \notag \\
  & \quad \norm{x}_{1,\infty} \leq \SC  \notag \\
  & \quad x \in \interior \stdCone. \notag
 \end{align} 
We will discuss \ref{p:1}, \ref{p:2} and \ref{p:3} in Section \ref{sec:vol}, 
but in a nutshell, we construct $Q$ in such a way that if $x$ is feasible for \eqref{eq:feas_p_s}, then  $Q^{-1}(x)$ is feasible for \eqref{eq:feas_p_s2}. On the other hand, if $x$ is feasible for \eqref{eq:feas_p_s2}, then $Q(x)$ is a feasible solution to the original problem \eqref{eq:feas_p}.

Since the system \eqref{eq:feas_p_s2} has the same shape as \eqref{eq:feas_p}, we may apply the basic procedure again. Then, \ref{p:3} ensures that we are making progress towards confining the $k$-th block of the feasible region of \eqref{eq:feas_p_s} to a region inside of $\stdCone _k$ with smaller volume. As in Chubanov's algorithm, when $\stdCone _k$ is the half-line $\Re_+$, \ref{p:3} ensures that 
the $x_k$ is contained in an interval $[0,\delta]$, with $\delta < 1$.

The main complexity result are given in Propositions \ref{prop:bp} and Theorem 
\ref{theo:main}. Our version of the basic procedure (Algorithm \ref{alg:basic}) takes at most $\ell^3 r_{\max}^2$ iterations, where $r_{\max} = \max \{r_1,\ldots,r_\ell \}$. The main algorithm (Algorithm \ref{alg:main}) takes at most $\frac{r}{\varphi(2)}\log \left(\frac{1}{\epsilon}\right)$ iterations, where $\varphi(2)$ is a positive constant as in Theorems \ref{theo:step} and \ref{theo:vol2}.

Here we highlight some of the keys aspects of our analysis while contrasting with the approach by Pe\~na and Soheili \cite{PS16}.
\begin{enumerate}
	\item Our analysis take into consideration the block division of $\stdCone$. This is expressed, in part, by our usage of the $\norm{\cdot}_{1,\infty}$ spectral norm, which we believe is novel in this context and 
	has advantages even when $\ell = 1$, when it becomes the $\norm{\cdot}_1$ spectral norm. As such, the complexity of the basic procedure (Algorithm \ref{alg:basic}) and the overall complexity is expressed in terms of $\ell$ and $r_{\max} = \max \{r_1, \ldots, r_{\ell}\}$. Our version of the basic procedure 
	performs at most $\mathcal{O}(\ell ^3 r_{\max}^2)$ iterations (Proposition \ref{prop:bp}) and is analogous to the ``Von Neumann scheme'' discussed in \cite{PS16}, which, as the authors remark, when specialized to $\stdCone = \Re_+^n$ gives essentially the same Basic Procedure described by Chubanov. Their 
	version using the  ``Von Neumann scheme'' produces a basic procedure 
	which performs at most $\mathcal{O}(r^4)$ iterations, where $r = r_1 + \cdots + r_\ell$. Although not always, there are at least a few cases of interest where $\ell ^3 r_{\max}^2$ is a better bound than $r^4$. 
	
	Consider, for instance, the case where the rank of the blocks are equal or close to being equal. If we have product of $\ell$ second order cones, we get a complexity of $\mathcal{O}(\ell^3)$ vs $\mathcal{O}(\ell ^4)$. The same bounds hold for the case $\stdCone = \Re^n_+$. In the specific case of semidefinite programming over the $n\times n$ matrices, we get $\mathcal{O}(n^2)$ vs $\mathcal{O}(n^4)$. One case for which the bound in \cite{PS16} is better is when $\ell = 30$, $r_{\max} = 21$ and $r = 50$.
		
	In \cite{PS16}, the authors also suggest the usage of the so-called ``smooth perceptron'' \cite{SP12} which lowers the number of iterations to $\mathcal{O}(r^2)$, although each iteration is more expensive.
	The approach we describe here can easily be adapted to use the smooth perceptron, thus yielding a Basic Procedure requiring no more than $\mathcal{O}(\ell ^{3/2} r_{\max})$ iterations, which still has better performance in the cases we mentioned, see the remarks after Proposition \ref{prop:bp_c}.

	\item The progress of the algorithm in \cite{PS16} is measured through the quantity $\chi = \max \{\det x \mid  \stdMap x = 0, x \in \interior \stdCone, \norm{x}^2 = r \}$. 
	The performance of the approach in \cite{PS16} also depends on $\chi$ and this leads to a very elegant complexity analysis.
	Nevertheless, we think some of the geometric intuition is lost by using $\chi$.
	We hope to recover that by painting an intuitive picture in terms of volumes and half-spaces.
	
	Furthermore, the algorithm in \cite{PS16} runs until a solution to \eqref{eq:feas_d} or 
	\eqref{eq:feas_p} is found, which happens in a finite number of iterations when $\eqref{eq:feas_p}$ is feasible. We remark that it is possible that the algorithm in 
	\cite{PS16} loops infinitely and this is also true for Algorithm 2 of \cite{PS16}, see the comments after Thereom 1 therein. 
	Clearly, in a practical implementation, we also have to introduce some stopping criteria that depends, perhaps, on the machine precision, since it is meaningless to continue computations after the intermediate numbers get smaller than a certain threshold.
	
	In this sense, it is of great interest to know what could be said about the eigenvalues of	the solutions of \eqref{eq:feas_p} or of some scaled version of it, when the algorithm in \cite{PS16} is stopped prematurely. 
	In contrast, at each iteration of Algorithm \ref{alg:main} we get concrete upper bounds on $\lambda _{\min}(x)$, for $x \in \feasPs$, see Lemma \ref{lemma:stop}. This is possible, in part, due to the volumetric considerations in Proposition \ref{prop:min_vol}. We believe a similar discussion would have been useful in \cite{PS16}. We should remark however, that Theorem 3 in \cite{PS16} implies that after $k$ iterations of their Algorithm 4, the bound 
	$\chi \leq 1.5^{-k}$ holds.

	\item At the end of the basic procedure in \cite{PS16}, a vector $z$ is obtained and only the component with largest eigenvalue takes a role in the actual rescaling of the problem, see Step 4 in Algorithm 4 in \cite{PS16}. Furthermore, a fixed step size is used throughout the algorithm (the parameter $a$ in Algorithm 4 in \cite{PS16}).  
	
	In our approach, after the basic procedure ends, we obtain a vector $y$ but do not discard the smaller eigenvalues.  We then check for each block $\stdCone_k$, whether $y_k$ affords a volume reduction for that block and if it does, we take the maximum possible step size our analysis permits, see Theorems \ref{theo:step} and \ref{theo:vol2}.
	This corresponds to Lines \ref{alg:main:rho} and \ref{alg:main:if} in Algorithm \ref{alg:main} and this behavior also sets our approach apart from the algorithm in \cite{KT16}. A graph relating the step size $\rho$ and the volume reduction can be seen in Figure \ref{fig:vol}.
\end{enumerate}

This article is divided as follows. On Section \ref{sec:pre} we review some notions related to symmetric cones. On Section \ref{sec:vol} we discuss several notions regarding volumes in symmetric cones, in particular, we discuss how to confine the blocks of \eqref{eq:feas_p_s} to a region with smaller volume starting from an appropriate $y$ vector. On Section \ref{sec:bp}, we discuss the basic procedure and on Section \ref{sec:full} we discuss the main algorithm. Section \ref{sec:conc} concludes this paper.
In Appendix \ref{app:sdp}, for ease of reference, we restate Algorithms \ref{alg:basic} and \ref{alg:main} for the case of semidefinite programming.

\section{Preliminary considerations}\label{sec:pre}
Here, we review some  aspects of the theory of symmetric cones and Euclidean Jordan algebras. 
 More 
details can be found in the book by Faraut and Kor\'anyi \cite{FK94} and also 
in the survey article by Faybusovich \cite{FB08}.  Let $\jAlg$ be a finite dimensional Euclidean space equipped with an inner product $\inProd{\cdot}{\cdot}$, and $\jProd{}{}:\jAlg\times \jAlg \to \jAlg$ be a bilinear 
map. We say that  $(\jAlg, \jProd{}{})$ is an Euclidean Jordan algebra if the following properties are satisfied.
\begin{enumerate}
	\item $\jProd{y}{z} = \jProd{z}{y}$,
	\item $\jProd{y}({\jProd{y^2}{z}}) = \jProd{y^2}({\jProd{y}{z}})$, where $y^2 = \jProd{y}{y}$,
	\item $\inProd{\jProd{y}{z}}{w} = \inProd{y}{\jProd{z}{w}}$,
\end{enumerate}
for all $y,w,z \in \jAlg$. All symmetric cones arise as the cone of squares of some Euclidean Jordan algebra.
That is, for a symmetric cone $\stdCone$, we have $\stdCone = \{\jProd{x}{x} \mid x \in \jAlg \}$, for some Euclidean Jordan algebra $(\jAlg,\jProd{}{})$. See Theorems III.2.1 and III.3.1 in \cite{FK94}, for more details.
Furthermore, 
we can assume that $\jAlg$ has an (unique) identity element $\stdInt$ satisfying $\jProd{y}{\stdInt} = y$, for all $y \in \jAlg$.

In what follows, we say that $c$ is \emph{idempotent} if 
$\jProd{c}{c} = c$. Moreover, $c$ is \emph{primitive} if it is nonzero and there is no way of writing 
$c = a+b$ with $a$ and $b$ nonzero idempotent elements satisfying $\jProd{a}{b} = 0$.

\begin{theorem} [Spectral theorem, see Theorem III.1.2 in \cite{FK94}]\label{theo:spec}
	Let $(\jAlg, \jProd{}{} )$ be an Euclidean Jordan Algebra and let $x \in \jAlg$. Then there are:
	\begin{enumerate}[label=({\it \roman*})]
		\item primitive idempotents $c_1, \dots, c_r$ satisfying
	\begin{flalign}
	& \jProd{c_i}{c_j} = 0 \, \,\,\,\, \qquad \qquad \text{ for } i \neq j, \\
	&\jProd{c_i}{c_i} = c_i, \, \, \, \, \qquad \qquad  i = 1, \ldots, r, \\
	& c_1 + \cdots + c_r  = \stdInt, \qquad i = 1, \ldots, r,
	\end{flalign}
		\item unique real numbers $\eig _1, \ldots, \eig _r$ satisfying
		\begin{equation}		
		x = \sum _{i=1}^r \eig _i c_i \label{eq:dec}.
		\end{equation}
	\end{enumerate}
Moreover, $r$ only depends on $\jAlg$.
\end{theorem}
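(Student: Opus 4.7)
The plan is to reduce the problem to the commutative associative polynomial subalgebra $\Re[x] \subseteq \jAlg$ generated by $\stdInt$ and $x$, produce a preliminary orthogonal idempotent decomposition there via Lagrange interpolation, and then refine it to primitive idempotents by induction on $\dim \jAlg$.

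The Euclidean condition $\inProd{\jProd{y}{z}}{w} = \inProd{y}{\jProd{z}{w}}$ implies that the left-multiplication operator $L(x)\colon y \mapsto \jProd{x}{y}$ is self-adjoint, so its minimal polynomial splits over $\Re$ as $m_x(t) = \prod_{j=1}^{k}(t-\mu_j)$ with distinct real roots. Since $\Re[x]$ is commutative and associative (by power associativity, a consequence of the Jordan identity), $m_x(x) = 0$ also holds in $\jAlg$. I would then form the Lagrange interpolants
\begin{equation*}
p_j := \prod_{i \neq j} \frac{x - \mu_i \stdInt}{\mu_j - \mu_i} \in \Re[x], \qquad j = 1,\ldots,k,
\end{equation*}
and read off from the standard Lagrange identities, interpreted inside $\Re[x]$, that $\sum_j p_j = \stdInt$, $\jProd{p_j}{p_j} = p_j$, $\jProd{p_j}{p_{j'}} = 0$ for $j \neq j'$, and $x = \sum_j \mu_j p_j$. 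The $\mu_j$ are uniquely determined as the roots of $m_x$.

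Each $p_j$ is idempotent but not yet primitive. To refine them, I would invoke the Peirce decomposition: for any idempotent $c$, the power-associativity identity $2L(c)^3 - 3L(c)^2 + L(c) = 0$ shows $L(c)$ has eigenvalues only in $\{0,\tfrac{1}{2},1\}$, yielding $\jAlg = \jAlg_0(c)\oplus \jAlg_{1/2}(c) \oplus \jAlg_1(c)$, and the $1$-eigenspace $\jAlg_1(c) := \{y \in \jAlg : \jProd{c}{y}=y\}$ is itself a Euclidean Jordan algebra with identity $c$ and $\dim \jAlg_1(c) < \dim \jAlg$ whenever $c \neq \stdInt$. Induction on $\dim \jAlg$ (base case $\jAlg = \Re \stdInt$, where $\stdInt$ is manifestly primitive) then refines each $p_j$ into a sum of primitive idempotents of $\jAlg_1(p_j)$; a short Peirce-component argument, using that any orthogonal decomposition $c = a+b$ satisfies $\jProd{a}{c} = a$ and $\jProd{b}{c} = b$, transfers primitivity from $\jAlg_1(p_j)$ back to $\jAlg$. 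Assembling these refinements across all $j$ yields the required decomposition \eqref{eq:dec}.

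To establish that $r$ is intrinsic to $\jAlg$, I would call $x$ \emph{regular} when $\deg m_x$ is maximal, note that the regular locus is open and dense in $\jAlg$ (being the complement of a polynomial vanishing locus), and observe that for a regular $x$ the interpolants $p_j$ are already primitive, so $r$ equals this generic degree independently of $x$. The main obstacle I anticipate is the Peirce step: verifying that $\jAlg_1(c)$ inherits the full Euclidean Jordan structure and that primitivity descends cleanly between $\jAlg$ and its Peirce $1$-subalgebra requires careful bookkeeping with the Peirce multiplication rules $\jAlg_\alpha(c) \circ \jAlg_\beta(c) \subseteq \jAlg_{\alpha+\beta}(c) \oplus \cdots$, and this is where essentially all of the Jordan-algebraic heavy lifting of the theorem is concentrated.
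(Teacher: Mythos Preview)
The paper does not prove this statement: it is quoted as background from Faraut and Kor\'anyi \cite[Theorem~III.1.2]{FK94} and used without argument. Your outline is essentially the proof given in that reference---Lagrange interpolation in the commutative associative subalgebra $\Re[x]$ to obtain a coarse orthogonal idempotent decomposition, followed by Peirce refinement and induction on $\dim\jAlg$, with the invariance of $r$ handled via regular elements.

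One genuine imprecision worth flagging: you take $m_x$ to be the minimal polynomial of the \emph{operator} $L(x)$ and then form the Lagrange interpolants from its roots. In general the minimal polynomial of $L(x)$ properly contains the minimal polynomial of the \emph{element} $x$ as a divisor (its degree can exceed $k$), and for any root $\mu$ of the former that is not a root of the latter the corresponding interpolant $p_\mu(x)$ vanishes identically, so you would be carrying spurious zero ``idempotents'' through the argument. The fix is simply to work with the minimal polynomial of $x$ itself; self-adjointness of $L(x)$ is then invoked only to guarantee that this divisor has simple real roots. With that adjustment, together with a line justifying why the interpolants for a regular $x$ are already primitive (in \cite{FK94} this is argued by showing every complete system of orthogonal primitive idempotents has the same cardinality), your plan is correct and matches the cited source.
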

We say that the $c_1, \ldots , c_r$ in Theorem \ref{theo:spec} form a  \emph{Jordan Frame} for $x$. The 
$\lambda _1, \ldots, \lambda _r$ are the \emph{eigenvalues} of $x$. We will denote the maximum eigenvalue of 
$x$ by $\eig  _{\max} (x)$ and the minimum by $\eig _{\min} (x)$.
The $r$ appearing in the theorem is 
called the \emph{rank} of $\jAlg$ and depends only on the algebra $\jAlg$.  The rank of $x$ is the number of nonzero eigenvalues.
Given $x \in \jAlg$, we define its trace by
$$
\tr x = \eig _1 + \cdots + \eig _r,
$$
where $\eig _1, \cdots, \eig _r$ are the eigenvalues of 
$x$. The trace function is linear and can be used to define an 
inner product for $\jAlg$. So, henceforth, we will assume that the inner product $\inProd{\cdot}{\cdot}$ is defined as follows.
$$
\inProd{x}{y} = \tr (\jProd{x}{y}).
$$
When $\jAlg$ is the space of $n\times n$ symmetric matrices, this inner product becomes the usual Frobenius inner product.
Note that if $c$ is a primitive idempotent, we have $\inProd{c}{c} = 1$. Furthermore, we have $\inProd{\stdInt}{\stdInt} = r$.
We also have a determinant function defined as 
$$
\det x = \eig_1 \times \cdots \times \eig _r.
$$

If the rank of $x$ is equal to $r$, it admits an inverse element denoted by $x^{-1}$. We 
have $x^{-1} = \sum _{i=1}^r \eig _i^{-1} c_i $. We then have $\jProd{x}{x^{-1}} = \stdInt$. 

We recall the following properties of $\stdCone$. The results follows 
from various propositions that appear in 
\cite{FK94}, such as Proposition~III.2.2 and Exercise 3 in Chapter III. See also Equation (10) in 
\cite{Sturm2000}.
\begin{proposition}\label{prop:aux}
	Let $x \in \jAlg$.
\begin{enumerate}[label=({\it \roman*})]
		\item $x \in \stdCone$ if and only if the eigenvalues of $x$ are nonnegative. \label{paux:1}
		\item $x \in \interior \stdCone$ if and only if the eigenvalues of $x$ are positive. \label{paux:2}
\end{enumerate}
\end{proposition}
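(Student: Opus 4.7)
My plan is to derive both claims directly from the spectral theorem (Theorem \ref{theo:spec}) together with the definition $\stdCone = \{\jProd{y}{y} \mid y \in \jAlg\}$. The key computational fact that I would use repeatedly is that if $c_1,\ldots,c_r$ is a Jordan frame and $y = \sum_i \mu_i c_i$, then orthogonality $\jProd{c_i}{c_j} = 0$ for $i\neq j$ and idempotency $\jProd{c_i}{c_i} = c_i$ give $y^2 = \sum_i \mu_i^2 c_i$. Combined with the uniqueness of the eigenvalues asserted in Theorem \ref{theo:spec}, this is a very rigid tool.

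For \ref{paux:1}, the forward direction goes as follows. If $x \in \stdCone$, write $x = y^2$ with $y\in\jAlg$, spectrally decompose $y = \sum_i \mu_i c_i$, and conclude $x = \sum_i \mu_i^2 c_i$. Since the $c_i$ still form a Jordan frame, the uniqueness clause of Theorem \ref{theo:spec} forces the eigenvalues of $x$ to be precisely the $\mu_i^2 \geq 0$. For the converse, given $x = \sum_i \eig_i c_i$ with $\eig_i \geq 0$, I would explicitly exhibit a square root $y = \sum_i \sqrt{\eig_i}\, c_i$, which satisfies $y^2 = x$, hence $x \in \stdCone$.

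For \ref{paux:2}, I would use \ref{paux:1} as a black box, plus continuity of $\eig_{\min}$. For the ``only if'' direction, suppose $x \in \interior\stdCone$ but some $\eig_i = 0$ in the decomposition $x = \sum_j \eig_j c_j$. Then for any $\varepsilon > 0$ the perturbed element $x - \varepsilon c_i = \sum_{j\neq i}\eig_j c_j + (-\varepsilon)c_i$ has $-\varepsilon$ among its eigenvalues (again by uniqueness in Theorem \ref{theo:spec}), so by \ref{paux:1} it lies outside $\stdCone$; since $\|\varepsilon c_i\| = \varepsilon$ can be made arbitrarily small, this contradicts the interior assumption. For the ``if'' direction, assume all $\eig_i > 0$, so $\eig_{\min}(x) > 0$; a small neighborhood of $x$ then consists of elements $y$ with $\eig_{\min}(y) > 0$, and by \ref{paux:1} each such $y$ lies in $\stdCone$, so $x \in \interior\stdCone$.

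The only nontrivial analytic ingredient is the continuity of $\eig_{\min}$ on $\jAlg$, which is the one point I would have to justify carefully (or cite) — it is the Jordan-algebraic analogue of continuity of the smallest eigenvalue of a symmetric matrix and is the main obstacle to a fully self-contained argument; everything else reduces to algebraic manipulations with Jordan frames and the uniqueness of the spectrum.
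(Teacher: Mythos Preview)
The paper does not give its own proof of this proposition; it is stated with references to Faraut--Kor\'anyi (Proposition~III.2.2 and Exercise~3 of Chapter~III) and to Sturm in lieu of an argument. Your direct spectral-theorem proof is correct and is essentially the standard route. The only gap you flag yourself --- continuity of $\lambda_{\min}$ --- can in fact be bypassed: first show $\stdInt \in \interior\stdCone$ directly (for any $h$, decompose $h = \sum_i \nu_i d_i$ in some Jordan frame; then $\stdInt + th = \sum_i (1+t\nu_i) d_i$ has all eigenvalues positive for $|t|$ small, hence lies in $\stdCone$ by \ref{paux:1}). Then, if $\lambda_{\min}(x) = \delta > 0$, write $x = \delta\stdInt + (x - \delta\stdInt)$ with $x - \delta\stdInt \in \stdCone$ by \ref{paux:1}, and invoke the convex-cone fact $\interior\stdCone + \stdCone \subseteq \interior\stdCone$. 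This makes your argument fully self-contained.
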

It follows that if $x \in \stdCone$, it admits a square root defined as $\sqrt{x} = \sum _{i=1}^r \sqrt{\eig} _i c_i$. With that, we have $\jProd{\sqrt{x}}{\sqrt{x}} = x$. We will also write 
$x^{1/2}, x^{-1/2}$ for $\sqrt{x}$ and $\sqrt{x^{-1}}$, respectively.

An Euclidean Jordan Algebra $(\jAlg, \jProd{}{})$ is said to be \emph{simple} if 
it is not possible to decompose it as an orthogonal direct sum $\jAlg = \jAlg _1 \oplus \jAlg _2$ with $\jAlg _1$ and $\jAlg _2$ 
being themselves nonzero Euclidean Jordan algebras. When $\stdCone$ is the cone of squares of a 
simple Euclidean Jordan algebras, we will also say that it is a \emph{simple cone}. 
In this paper, we assume the following decomposition
\begin{align*}
\jAlg & = \jAlg _1 \oplus \ldots \oplus \jAlg _\ell\\
\stdCone & = \stdCone _1 \oplus \ldots \oplus \stdCone _\ell,
\end{align*}
where the $\jAlg _i$ are simple Euclidean Jordan Algebras of rank $r_i$ and 
$\stdCone _i$ is the cone of squares of $\jAlg _i$. The dimension of each $\jAlg _i$ is 
denoted by $d_i$ and we have $d = d_1 + \cdots + d_\ell$ and $r = r_1 + \cdots + r_\ell$.
We also have $\stdInt = (\stdInt _1, \ldots,\stdInt _\ell)$, where $\stdInt _i$ is 
the identity in $\jAlg _i$.
Note that orthogonality 
expressed by this decomposition is not only with respect the inner product $\inProd{\cdot}{\cdot}$ 
but also with respect the Jordan Product $\jProd{}{}$, meaning that $\jProd{x_i}{y_j} = 0$ if 
$x_i \in \jAlg_i, y_j \in \jAlg_j$ with $i \neq j$.

If $x \in \jAlg$, we  write $x_i$ for the corresponding $i$-th block of $x$. That is, we have $x = (x_1,\ldots, x_\ell)$, where each $x_i \in \jAlg_i$. 
With this decomposition, the theory described so far can be applied in a blockwise fashion. So, we have, for instance, 
$\jProd{x}{y} = (\jProd{x_1}{y_1},\ldots,\jProd{x_\ell}{y_\ell})$,
$\tr x = \sum _{i=1}^\ell \tr x_i$, $\det x = \prod_{i=1}^{\ell} \det x_i$ and
$\lambda _{\min}(x) = \min \{\lambda _{\min}(x_1), \ldots, \lambda _{\min}(x_{\ell}) \}$.

We also have a so-called \emph{quadratic representation} $Q_x$ defined as the linear 
map such that 
$$
\Qr{x}(y) = 2\jProd{x}{(\jProd{x}{y})} - \jProd{x^2}{y}.
$$
We also have $\Qr{x}(y)  = (\Qr{x_1}(y_1), \ldots, \Qr{x_\ell}(y_\ell)  )$.
Denote by $I_k$ the identity map on $\jAlg _k$, we have $\Qr{\stdInt_k} = I_k$.
The quadratic representation has the following key properties, see Section 1.2 in \cite{Sturm2000} and Section \RNum{2}.3 together with  Proposition \RNum{3}$.4.2$ in 
\cite{FK94}.
\begin{proposition}\label{prop:quad}
Suppose $x \in \interior \stdCone$
\begin{enumerate}[label=({\it \roman*})]
	\item $\Qr{x}$ is a self-adjoint linear map on $\jAlg$ such that $\Qr{x}(\stdCone) = \stdCone$.
	\item $\Qr{x}^{-1} = \Qr{x^{-1}}$,  $\Qr{x}(\stdInt) = x^2$, $\Qr{x}(x^{-1}) = x$, $\Qr{ax} = a^2 \Qr{x}$ for $a \in \Re$.
	\item if $\stdCone$ is simple then $\det \Qr{x} = (\det x)^{\frac{2d}{r}}$.
\end{enumerate}
\end{proposition}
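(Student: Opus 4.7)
The three items compile classical facts from \cite{FK94,Sturm2000}, so the plan is to derive each directly from the Euclidean Jordan algebra axioms, invoking two nontrivial ingredients from the literature: the \emph{fundamental formula} $\Qr{\Qr{a}b} = \Qr{a}\Qr{b}\Qr{a}$ and the Peirce decomposition with its uniform off-diagonal multiplicities in the simple case.

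For (i), self-adjointness will come from applying the trace-associativity axiom $\inProd{\jProd{a}{b}}{c} = \inProd{b}{\jProd{a}{c}}$ to each term of $\inProd{\Qr{x}(y)}{z} = 2\inProd{\jProd{x}{(\jProd{x}{y})}}{z} - \inProd{\jProd{x^2}{y}}{z}$, shifting every copy of $x$ across the bracket to obtain $\inProd{y}{\Qr{x}(z)}$. For $\Qr{x}(\stdCone) = \stdCone$, I would specialize the fundamental formula at $a = x^{1/2}$, $b = \stdInt$: since $\Qr{x^{1/2}}(\stdInt) = x$ and $\Qr{\stdInt} = I$, it yields $\Qr{x} = \Qr{x^{1/2}}^2$. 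Hence $\Qr{x}$ is positive self-adjoint (so invertible), and continuity of $x \mapsto \Qr{x^{1/2}}$ along the connected path $t \mapsto (1-t)\stdInt + tx^{1/2}$ inside $\interior \stdCone$ keeps $\det \Qr{x^{1/2}}$ away from zero, so $\Qr{x^{1/2}}$ is a linear bijection preserving $\stdCone$; squaring closes the argument.

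For (ii), the identities $\Qr{x}(\stdInt) = 2x^2 - x^2 = x^2$, $\Qr{x}(x^{-1}) = 2\jProd{x}{\stdInt} - x = x$, and $\Qr{ax} = a^2 \Qr{x}$ are immediate from expanding the definition of $\Qr{x}$. For $\Qr{x}^{-1} = \Qr{x^{-1}}$, my plan is to feed $a = x$, $b = x^{-1}$ into the fundamental formula: using $\Qr{x}(x^{-1}) = x$ just established, it becomes $\Qr{x} = \Qr{x}\Qr{x^{-1}}\Qr{x}$, and cancelling the invertible $\Qr{x}$ (from (i)) leaves $\Qr{x^{-1}}\Qr{x} = I$.

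For (iii), the tool is the Peirce decomposition relative to a Jordan frame $c_1, \dots, c_r$ diagonalizing $x = \sum_i \eig_i c_i$: one has $\jAlg = \bigoplus_{i \leq j} V_{ij}$ with $V_{ii} = \Re c_i$, and in the simple case every off-diagonal piece $V_{ij}$ ($i<j$) shares the same dimension $d_0 = 2(d-r)/(r(r-1))$. A direct computation using $\jProd{c_i}{z} = z/2 = \jProd{c_j}{z}$ for $z \in V_{ij}$ shows $\Qr{x}$ acts by the scalar $\eig_i^2$ on $V_{ii}$ and by $\eig_i \eig_j$ on $V_{ij}$; assembling these eigenvalues yields $\det \Qr{x} = \prod_i \eig_i^{2 + d_0(r-1)}$, and the exponent simplifies to $2d/r$, giving $\det \Qr{x} = (\det x)^{2d/r}$. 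The only real obstacle is borrowing these two classical Jordan-theoretic facts; granted them, each item reduces to routine manipulations of the spectral decomposition and the trace inner product.
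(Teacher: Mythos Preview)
The paper does not prove this proposition at all: it simply states the result and refers the reader to Section~1.2 of \cite{Sturm2000} and Section~II.3 together with Proposition~III.4.2 of \cite{FK94}. Your proposal therefore goes well beyond what the paper does, supplying an actual argument where the paper gives only citations. The ingredients you invoke (trace-associativity, the fundamental formula $\Qr{\Qr{a}b} = \Qr{a}\Qr{b}\Qr{a}$, and the Peirce decomposition with uniform off-diagonal dimension in the simple case) are exactly the standard ones from those references, and the computations in (ii) and (iii) are correct as sketched.

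One small point: in (i), your argument that $\Qr{x^{1/2}}$ preserves $\stdCone$ is incomplete as written. Showing that $\det \Qr{x^{1/2}} \neq 0$ along the path only gives invertibility, not that $\Qr{x^{1/2}}(\stdCone) \subseteq \stdCone$. The usual route (e.g.\ Proposition~III.2.2 in \cite{FK94}) is to observe that $\Qr{a}$ maps invertible elements to invertible elements via $(\Qr{a}y)^{-1} = \Qr{a^{-1}}y^{-1}$ (another consequence of the fundamental formula), so $\Qr{a}$ permutes the connected components of the set of invertible elements; since $\Qr{a}(\stdInt) = a^2 \in \interior\stdCone$, it must map $\interior\stdCone$ onto itself. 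You have the right tool in hand, but the connectivity argument should be applied to the image of $\interior\stdCone$ rather than to the operator determinant.
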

We remark that since $\Qr{x}$ is a self-adjoint linear map over the linear space 
$\jAlg$, all its eigenvalues are real. In item $(iii)$, the determinant of $\Qr{x}$ is, of course, the product of the eigenvalues of $\Qr{x}$. Another way of visualizing the determinant is recalling 
that after identifying $\jAlg$ with some $\Re^d$, $\Qr{x}$ is representable as a square matrix $S$, so the determinant that appears in item $(iii)$ refers to the determinant of that $S$.
See Section 1.2 in the work by Sturm \cite{Sturm2000} for the relations 
between eigenvalues of $\Qr{x}$ and $x$.

\subsection{Spectral norms for general Jordan algebras}\label{sec:norms}

In this paper, a few different norms will come into play\footnote{It is not entirely obvious that $\norm{\cdot}_{\infty}$ and $\norm{\cdot}_1$ are norms in the 
	general setting of Jordan algebras, for more details see Example 2 in \cite{FB08}.}. We will write $\norm{\cdot}$ or  $\norm{\cdot}_2$ for the norm 
induced by $\inProd{\cdot}{\cdot}$. We have $\norm{x} = \sqrt{\tr(x^2)} = \sqrt{\lambda_1^2 + \cdots + \lambda _r^2}$.
We will also define $\norm{x}_1 = \abs{\lambda _1} + \cdots + \abs{\lambda _r}$ and 
$\norm{x}_{\infty} = \max\{\abs{\lambda _1}, \ldots, \abs{\lambda _r}\} = \max \{\lambda _{\max}(x),-\lambda_{\min}(x) \}$. Note that if 
$x \in \stdCone$, we have $\norm{x}_1 = \inProd{x}{\stdInt}$ and $\norm{x}_{\infty} = \lambda _{\max}(x)$. 

Recall that $\jAlg$ is not necessarily a simple algebra, but  admits a decomposition $\jAlg = \jAlg _1 \oplus \ldots \oplus \jAlg _\ell$, where each $\jAlg _i$ is indeed simple. 
In this work, we will 
consider norms that take into account the way that $\jAlg$ is decomposed as simple algebras. 
We will use $\norm{x}_{1,\infty}$ to denote the maximum among the $1$-norms of the blocks. That is 
$$
\norm{x}_{1,\infty} = \max \{\norm{x_1}_1, \ldots,\norm{x_\ell}_1  \}.
$$
So that when $\ell = 1$, we have $\norm{x}_{1,\infty} = \norm{x}_1$. Furthermore, when $ x \in \stdCone$, 
we have 
$$\norm{x}_{1,\infty} = \max \{\inProd{x_1}{\stdInt_1}, \ldots,\inProd{x_\ell}{\stdInt_\ell} \}.$$
We also define $\norm{x}_{\infty,1} = \norm{x_1}_{\infty} + \cdots + \norm{x_\ell}_{\infty}$.

Recall that if $g(\cdot)$ is an arbitrary norm on $\jAlg$, we define its conjugate norm 
by $g(x)^* = \sup \{\inProd{x}{y} \mid g(y) \leq 1 \}$. With that, 
we have the generalized Cauchy-Schwarz inequality $\inProd{x}{y}\leq g(x)g(y)^*$.
We then have the following 
result, which is unsurprising but requires a careful proof.
\begin{proposition}\label{prop:inequality}
Let $y \in \jAlg$. Then
\begin{enumerate}[label=({\it \roman*})]
	\item $\norm{y}_{1}^* = \norm{y}_{\infty}$,
	\item $\norm{y}_{1,\infty}^* = \norm{y}_{\infty,1}$,
	\item $\norm{y}_{\infty,1} \leq \sqrt{\ell}\norm{y}_2$.
\end{enumerate}
\end{proposition}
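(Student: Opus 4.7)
The plan is to establish (i) first, since it is the essential ingredient and the only step that is not merely bookkeeping; (ii) then follows by decoupling the dual problem across blocks, and (iii) by a one-line Cauchy--Schwarz bound. The key lemma I would prove for (i) is that for any primitive idempotent $c$ and any $w \in \jAlg$, $|\inProd{c}{w}| \leq \norm{w}_{\infty}$. To see this, spectrally decompose $w = \sum_j \mu_j b_j$ with $\{b_1,\ldots,b_r\}$ a Jordan frame. Then $\inProd{c}{w} = \sum_j \mu_j \inProd{c}{b_j}$, where each coefficient $\inProd{c}{b_j}$ is nonnegative by self-duality of $\stdCone$ (both $c$ and $b_j$ lie in $\stdCone$), and $\sum_j \inProd{c}{b_j} = \inProd{c}{\stdInt} = \tr(c) = 1$ because $c$ is a primitive idempotent (so it has a single nonzero eigenvalue equal to $1$). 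Hence $\inProd{c}{w}$ is a convex combination of the $\mu_j$'s, which gives $|\inProd{c}{w}| \leq \max_j |\mu_j| = \norm{w}_{\infty}$. Expanding $x = \sum_i \eig_i a_i$ along its own Jordan frame and applying this bound termwise yields $|\inProd{x}{y}| \leq \sum_i |\eig_i|\,|\inProd{a_i}{y}| \leq \norm{y}_\infty \norm{x}_1$, hence $\norm{y}_1^* \leq \norm{y}_\infty$. The reverse inequality is attained by picking $b_{j^*}$ in the Jordan frame of $y$ with $|\mu_{j^*}| = \norm{y}_\infty$ and taking $x = \mathrm{sign}(\mu_{j^*})\, b_{j^*}$, which satisfies $\norm{x}_1 = 1$ and $\inProd{x}{y} = \norm{y}_\infty$.

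For (ii), I would exploit the block structure directly. Since $\inProd{x}{y} = \sum_{k=1}^\ell \inProd{x_k}{y_k}$ and the constraint $\norm{x}_{1,\infty} = \max_k \norm{x_k}_1 \leq 1$ decouples across blocks (each $x_k$ ranges independently over the unit $\norm{\cdot}_1$-ball of $\jAlg_k$), the supremum factors as $\sum_k \sup_{\norm{x_k}_1 \leq 1} \inProd{x_k}{y_k} = \sum_k \norm{y_k}_\infty = \norm{y}_{\infty,1}$, where the middle equality is just part (i) applied inside each simple factor $\jAlg_k$.

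For (iii), I would combine the pointwise bound $\norm{y_k}_\infty \leq \norm{y_k}_2$, which is immediate from $\max_i |\eig_i(y_k)| \leq \sqrt{\sum_i \eig_i(y_k)^2}$, with Cauchy--Schwarz in $\Re^\ell$ applied to the vectors $(1,\ldots,1)$ and $(\norm{y_1}_2,\ldots,\norm{y_\ell}_2)$. This gives $\norm{y}_{\infty,1} = \sum_k \norm{y_k}_\infty \leq \sum_k \norm{y_k}_2 \leq \sqrt{\ell}\,\sqrt{\sum_k \norm{y_k}_2^2} = \sqrt{\ell}\,\norm{y}_2$. The only subtle step in the entire proposition is the convex-combination observation that drives (i); once that is in hand, (ii) and (iii) are routine.
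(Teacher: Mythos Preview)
Your proof is correct, and for part (i) it takes a genuinely different route from the paper. The paper reduces $\norm{y}_1^*$ to $\sup_{c}\,|\inProd{y}{c}|$ over primitive idempotents $c$ (via the usual $\ell_1/\ell_\infty$ duality in $\Re^r$ on the eigenvalue vector), and then \emph{cites} a result of Hirzebruch to identify this supremum with $\norm{y}_\infty$. Your convex-combination argument --- spectrally decomposing $y$, observing that $\inProd{c}{b_j}\geq 0$ by self-duality and that $\sum_j \inProd{c}{b_j}=\inProd{c}{\stdInt}=1$, so that $\inProd{c}{y}$ is a convex combination of the eigenvalues of $y$ --- is a self-contained and more elementary substitute for that citation; it in fact reproves the relevant direction of Hirzebruch's result. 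For (ii) the paper argues by the inequality $\inProd{y_k}{z_k}\leq \norm{y_k}_\infty$ plus an explicit attaining $z$, whereas you directly decouple the supremum across blocks and invoke (i) in each simple factor; these are equivalent in content. For (iii) your argument and the paper's are essentially identical (Cauchy--Schwarz in $\Re^\ell$ combined with $\norm{y_k}_\infty\leq\norm{y_k}_2$), differing only in the order of the two steps.
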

\begin{proof}
\begin{enumerate}[label=({\it \roman*})]
	\item By definition, we have
	$$\norm{y}_{1}^* = \sup \left\{  \inProd{y}{z} \mid \norm{z}_{1} \leq 1 \right\}.$$
	Let $z \in \jAlg$ write its Jordan decomposition as 
	$z = \sum _{i=1}^r \lambda _i c_i$. Note that condition $\norm{z}_{1} \leq 1$  means 
	that $\abs{\lambda _1} + \cdots + \abs{\lambda _r}  \leq 1$. Let 
	$\mathcal{I}$ denote the set of primitive idempotents in $\jAlg$. We have
	\begin{align*}
	\norm{y}_{1}^* & =\sup _{c_1,\ldots, c_r \in \mathcal{I}}  \sup _{\abs{\lambda _1} + \cdots + \abs{\lambda _r}  \leq 1} \sum _{i=1}^r \lambda _i \inProd{y}{c_i} \\
	& = \sup _{c_1,\ldots, c_r \in \mathcal{I}} \max\{ \abs{{\inProd{y}{c_1}}},\ldots,\abs{\inProd{y}{c_r}} \}\\
	& = \sup _{c \in \mathcal{I}} \abs{\inProd{y}{c}},
	\end{align*}
	where the second equality follows from the fact that $\sum _{i=1}^r \lambda _i \inProd{y}{c_i} $ is 
	the Euclidean inner product between the vector $(\lambda _1, \ldots, \lambda _r)$ and 
	$(\inProd{y}{c_1},\ldots, \inProd{y}{c_r})$ in the $\Re^r$ space. In $\Re^r$, we 
	know that the usual $1$-norm and the usual $\infty$-norm are conjugated pairs and this is 
	why the equality follows. 
	
	Now, $\sup _{c \in \mathcal{I}} \abs{\inProd{y}{c}} = \sup _{c \in \mathcal{I}} \max\{\inProd{y}{c}, -\inProd{y}{c} \}$ holds. 
	Then, a result from Hirzebruch implies that $\sup _{c \in \mathcal{I}} \inProd{y}{c} = \lambda _{\max}(y)$ and $ \sup _{c \in \mathcal{I}} \inProd{y}{c} = - \lambda _{\min}(y)$.
	See either \emph{Satz} 2.3 in \cite{H70},  Exercise 4 in Chapter 4 of \cite{FK94} or Equation $(9)$ in \cite{Sturm2000}. This shows that $\norm{y}_{1}^* = \norm{y}_\infty$.

	\item By definition, we have
	$$\norm{y}_{1,\infty}^* = \sup \left\{ \sum _{i=1}^\ell \inProd{y_i}{z_i} \mid \norm{z}_{1,\infty} \leq 1 \right\}.$$
	Note that if $\norm{z}_{1,\infty} \leq 1$, then  $\norm{z_i}_1 \leq 1$, for all $i$.
	By the generalized Cauchy-Schwarz inequality, we have $\inProd{y_i}{z_i} \leq \norm{y_i}_\infty \norm{z_i}_1 \leq  \norm{y_i}_\infty$. Therefore, $\norm{y}_{1,\infty}^* \leq \norm{y}_{\infty,1}$. To show that 
	$\norm{y}_{1,\infty}^* = \norm{y}_{\infty,1}$, it is enough to construct $z$ such that 
	$\inProd{y}{z} = \norm{y}_{\infty,1}$ and $\norm{z}_{1,\infty} \leq 1$. 
	We construct $z$ in a block-wise fashion. First, let $y_i = \sum _{j=1}^{r_i} {\lambda _{j}} c_j$ be the Jordan decomposition of $y_i \in \jAlg _{i}$, where $\lambda _1 \leq \cdots \leq \lambda _{r_i}$. Then 
	$\norm{y_i}_{\infty} = \max \{-\lambda _1, \lambda _{r_i} \} $. We then let $z_i$ be  either 
	$c_1,-c_1$ or $c_{r_i}$, so that $\inProd{y_i}{z_i} = \norm{y_i}_{\infty}$ and $\norm{z_i}_1 = 1$. If we construct 
	the blocks of $z$ in this way we have $\inProd{y}{z} = \norm{y}_{\infty,1}$ and  $\norm{z}_{1,\infty} \leq 1$.
	\item Consider the vector $v = (\norm{y_1}_{\infty}, \ldots, \norm{y_\ell}_{\infty}) \in \Re^\ell$. Due to the Cauchy-Schwarz inequality we have
	\begin{align*}
	\norm{y}_{\infty,1} &= \inProd{v}{(1,\ldots,1)}  \\	
	& \leq \sqrt{\ell}\sqrt{\norm{y_1}_{\infty}^2 +  \cdots + \norm{y_\ell}_{\infty}^2} \\
	& \leq \sqrt{\ell} \sqrt{\norm{y_1}_{2}^2 +  \cdots + \norm{y_\ell}_{2}^2} \\
	&  = \sqrt{\ell} \norm{y}_2.
	\end{align*}
\end{enumerate}	
\end{proof}
We have the following corollary.
\begin{corollary}
Let $x,y \in \jAlg$. The following hold:
\begin{align}
\inProd{y}{x} &\leq \norm{y}_{1}\norm{x}_{\infty} \label{eq:inf}\\
\inProd{y}{x} &\leq \norm{y}_{1,\infty}\norm{x}_{\infty,1} \label{eq:1_inf}
\end{align}
\end{corollary}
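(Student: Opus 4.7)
The plan is to derive both inequalities as direct consequences of the generalized Cauchy-Schwarz inequality $\inProd{x}{y} \leq g(x)\, g(y)^*$ (valid for any norm $g$), combined with the dual norm identifications in parts (i) and (ii) of Proposition \ref{prop:inequality}. There is essentially no obstacle here since all the real work was already done in proving Proposition \ref{prop:inequality}; the corollary is just a repackaging.

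For inequality \eqref{eq:inf}, I would apply generalized Cauchy--Schwarz with $g = \norm{\cdot}_{1}$ to obtain
\[
\inProd{y}{x} \leq \norm{y}_{1} \,\norm{x}_{1}^{*},
\]
and then invoke part (i) of Proposition \ref{prop:inequality} to replace $\norm{x}_{1}^{*}$ by $\norm{x}_{\infty}$. For inequality \eqref{eq:1_inf}, I would likewise apply generalized Cauchy--Schwarz with $g = \norm{\cdot}_{1,\infty}$ to get
\[
\inProd{y}{x} \leq \norm{y}_{1,\infty} \,\norm{x}_{1,\infty}^{*},
\]
and then use part (ii) of Proposition \ref{prop:inequality} to identify $\norm{x}_{1,\infty}^{*}$ with $\norm{x}_{\infty,1}$.

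The only very minor subtlety worth noting is that the proposition is stated with the starred norm on $y$, while in the corollary the roles of $x$ and $y$ are swapped; this is harmless because the dual norm identities $\norm{\cdot}_{1}^{*} = \norm{\cdot}_{\infty}$ and $\norm{\cdot}_{1,\infty}^{*} = \norm{\cdot}_{\infty,1}$ hold for every element of $\jAlg$, so they apply equally well to $x$. No further computation is required.
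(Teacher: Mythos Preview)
Your proposal is correct and matches the paper's approach exactly: the paper states this corollary without proof, treating it as an immediate consequence of the generalized Cauchy--Schwarz inequality together with the dual norm identifications in Proposition~\ref{prop:inequality}. There is nothing to add.
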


\subsection{Examples}\label{sec:examples}
There is a classification of the finite simple symmetric cones and here we briefly review some of the more widely used 
examples. When $\jAlg = \S^n$ is the space of $n\times n$ symmetric matrices we can 
define $\jProd{x}{y}$ as $\frac{xy + yx}{2}$. With that, $\stdCone$ is the (simple) cone $\PSDcone{n}$ of $n\times n$ 
positive semidefinite matrices, which has rank $n$. We have that $\norm{\cdot}$ becomes the Frobenius norm. In this case, 
the eigenvalues as discussed in Theorem \ref{theo:spec} are exactly the same 
eigenvalues that appear in classical linear algebra. The primitive idempotents correspond to rank $1$ matrices with norm $1$.
So Theorem \ref{theo:spec} expresses the fact that any symmetric matrix can be written as linear combination of mutually orthogonal rank $1$ matrices.
Finally, the quadratic map $\Qr{x}$ is such that $\Qr{x}(y) = xyx$. For $x \in \stdCone$, we have $\norm{x}_1 = \tr x$ and $\norm{x}_{\infty} = \lambda _{\max}(x)$, where $\tr$ is the usual matrix trace.

Another interesting case is when $\jAlg = \Re^{n} = \Re\times \Re^{n-1}$ and $\Re^{n-1}$ is equipped the usual Euclidean inner product.
Let $x \in \jAlg$, we decompose it as $x = (x_1,\overline{x})$, where $x_1 \in \Re$ and $\overline{x} \in \Re^{n-1}$. 
We then define $\jProd{x}{y} = (x_1y_1 + \inProd{\overline{x}}{\overline{y}},x_1\overline{y} + y_1\overline{x})$. With that, $\stdCone$ becomes 
the Lorentz cone $\SOC{n} = \{x \in \jAlg \mid x_1^2 \geq x_2^2 + \ldots + x_n^2, x_1 \geq 0\}$. Note that the 
identity element is $\stdInt = (1,0,\ldots, 0)$. Given  $x \in \jAlg$, its eigenvalues and corresponding idempotents are
\begin{align}
\lambda _1 = x_1 + \norm{\overline{x}} & \qquad \lambda _2 = x_1 - \norm{\overline{x}} \label{eq:eig_soc}\\
c_1  = 
\begin{cases}
\frac{1}{2}(1,z)  & \quad \norm{\overline{x}} = 0\\
\frac{1}{2}\left(1,\frac{\overline{x}}{\norm{\overline{x}}}\right)  &\quad \norm{\overline{x}} \neq 0 
\end{cases} & \qquad c_2  = 
\begin{cases}
\frac{1}{2}(1,-z)  & \quad \norm{\overline{x}} = 0\\
\frac{1}{2}\left(1,-\frac{\overline{x}}{\norm{\overline{x}}}\right)  & \quad \norm{\overline{x}} \neq 0, 
\end{cases}\label{eq:eig_soc2}
\end{align}
where $z \in \Re^{n-1}$ is any vector satisfying $\norm{z} = 1$. See Example 1 in \cite{FB08}, Section 2 in \cite{TT99}, Section 2 and Proposition 1 in \cite{MT00} or  Equations (23) and (24) in \cite{AG03} for more details.  
We have $\det x = x_1^2 - \norm{\overline{x}}^2$ and $\tr (x) = 2x_1 $. Note 
that the inner product $\inProd{x}{y} = \tr(\jProd{x}{y})$ is twice the inner product 
on $\Re^n$, that is, $\inProd{x}{y} = 2(x_1y_1 + \cdots + x_ny_n)$. Note that 
if $x \in \stdCone$, then $\norm{x}_{1} = 2x_{1}$ and $\norm{x}_{\infty} = x_1 + \norm{\overline{x}}$.

The quadratic representation is given by 
$$
\Qr{x} = \begin{pmatrix}
\norm{x}^2 & & & \quad 2x_1\overline{x}^\T \\
2x_1\overline{x} & & & \quad \det xI_{n-1} + 2 \overline{x}\overline{x}^\T
\end{pmatrix},
$$
where $I_{n-1}$ is the $(n-1) \times (n-1)$ identity matrix and $ \overline{x}^\T$ indicates the transpose of the column 
vector $\overline{x}$. The rank of $\SOC{n}$ is $2$. 

In second order cone programming, it is common to consider a direct product 
of $\ell$ Lorentz cones in which case the rank of $\stdCone$ is $2\ell$. If 
$x \in \jAlg$, we have $x = (x_1,\ldots, x_\ell)$ and every $x_i$ can be written 
as $x_{i} = (x_{i1},x_{i2})$ with $x_{i1} \in \Re$, $x_{i2} \in \Re^{d_i-1}$.
In this case, if $x \in \stdCone$, then $\norm{x}_{1,\infty} = \max \, \{2x_{11},\ldots, 2x_{\ell1} \}$.
The condition $x_i \in \stdCone _i$ forces, in particular, $x_{i1}$ to be at least as large as 
the other components of $x_i$.
This means that $\norm{x}_{1,\infty}$ is \emph{twice} the coordinate of $x$ with largest value, and, therefore, is twice the usual (non-spectral) infinity norm, for elements inside the cone $\stdCone$. In particular, for the case of second order cones, the scaled problem \eqref{eq:feas_p_s} is 
essentially equivalent to the one considered in \cite{KT16}.

Finally, we remark that $\stdCone = \Re^n_+$ is not a simple cone. The Jordan-algebraic way of analyzing it is as a
direct product $\Re_+ \times \ldots \times \Re_+$. Note that  $\Re_+$ is  as simple cone of rank $1$. If we take $\jAlg = \Re$ and 
define $\jProd{x}{y} = (x_1y_1,\ldots, x_ny_n)$, the corresponding cone of squares is $\Re_+^n$. We have 
$\tr(x) = x_1+\cdots + x_n$, $\det x = x_1\times\cdots \times x_n$ and $\stdInt = (1,\ldots,1)$. The quadratic representation 
is given by 
$$
\Qr{x} = \begin{pmatrix}
x_1^2 & 0 & 0 \\
0 & \ddots & 0 \\
0 & 0 & x_n^2
\end{pmatrix}.
$$
In this case, the eigenvalues of $x$ are its components, so that $\norm{x}_{\infty} = \norm{x}_{1,\infty} = \max\, \abs{x_i}$ and $\norm{x}_{1} = \norm{x}_{\infty,1} = \abs{x_1} + \cdots + \abs{x_n}$.

\subsection{Remarks on notation} 
For a matrix $\stdMap$, we will write $\ker \stdMap$ for its kernel and $\stdMap^*$ for its adjoint.
If $x \in \jAlg$, we write $x_k$ for the corresponding block belonging to $\jAlg _k$. 
We will also write $w_k \in \jAlg _k$  for the elements of $\jAlg _k$, without necessarily 
assuming that $w_k$ is just a block of some element $w \in \jAlg$. This will appear in Theorem \ref{theo:vol2} and in Algorithm \ref{alg:main}, in order to emphasize the set to which the elements belong.

With this notation in mind, for $w_k,v_k \in \jAlg_k$, we define the 
half-space $H(w_k,v_k) \coloneqq \{x_k \in \jAlg _k \mid \inProd{x_k}{w_k} \leq \inProd{w_k}{v_k} \}$.
We also write $\Vol(w_k,v_k)$ for the volume of the region $H(w_k,v_k) \cap \stdCone_k$, which 
corresponds to the integral $\int _{H(w_k,v_k) \cap \stdCone_k} 1$.

As we are using subindexes to denote the blocks, we will use superindexes to indicate the iteration number in Algorithms \ref{alg:basic} and \ref{alg:main}, which should 
not be confused with exponentiation.

\section{Volumetric considerations}\label{sec:vol}
Let $r_{\max} = \max \{r_1,\ldots, r_\ell \}$.
The algorithm we discuss in this work depends on having at each iteration 
access to an element $y \in \stdCone$ satisfying 
$$
\norm{P_\stdMap  y} \leq \frac{1}{2r_{\max} \sqrt{\ell} } \norm{y}_{{1,\infty}},
$$
where $P_\stdMap$ is the orthogonal projection on the kernel of $\stdMap$. In particular, $P_\stdMap  x = x$ if 
$\stdMap x = 0$ and $P _{\stdMap}x = 0$ if and only if there is $u$ such that $x = \stdMap^{\T}u$. We will, in fact, conduct a more general analysis and suppose that $y$ satisfies
$$
\norm{P_\stdMap  y} \leq \frac{1}{\rho r_{\max}\sqrt{\ell}} \norm{y}_{{1,\infty}},
$$ for some $\rho > 1$.  We are now going to show how to use $y$ to construct $w_k, v_k$ such that $H(w_k,v_k)$ satisfies the properties \ref{p:1}, \ref{p:2} and \ref{p:3} described in Section \ref{sec:int}. We divide our discussion in two parts. 
In Section \ref{sec:vol:simple}, we deal with the case 
where $\stdCone$ is a simple symmetric cone. Then, we remove the simplicity 
assumption and do the general case in Section \ref{sec:vol:general}. 

\subsection{The simple case}\label{sec:vol:simple}
Here, we suppose that $\stdCone$ is a simple symmetric cone, so that the dimension of 
$\jAlg$ is $d$, $\ell = 1$,
$r_{\max} = r$ and $\norm{\cdot}_{1,\infty} = \norm{\cdot}_1$. First, we study a few properties 
of the intersection $H(w,v)\cap \stdCone$, where $H(w,v) = \{x \in \jAlg \mid \inProd{w}{x} \leq \inProd{w}{v} \}$. We 
denote the volume of $H(w,v)\cap \stdCone$ by $\Vol(w,v)$. We then have the following results, 
which are analogous to Proposition 2.2 in \cite{KT16}.

\begin{proposition} \label{prop:vol}
Suppose $w \in \interior \stdCone$. Then,
\begin{align}
Q_{{ w^{-1/2}\sqrt{\inProd{w}{v}}}}(H(\stdInt,\stdInt/r )) & = H(w,v) \label{prop:vol:1}\\
\Vol(w,v) & = \left(\frac{\inProd{w}{v}}{ \sqrt[r]{\det w}}\right)^{d } \Vol(e,e/r) \label{prop:vol:2}
\end{align}
\end{proposition}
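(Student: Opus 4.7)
The plan is to reduce everything to change-of-variables under the linear map $T \coloneqq Q_{w^{-1/2}\sqrt{\inProd{w}{v}}}$, using the properties of the quadratic representation collected in Proposition \ref{prop:quad}. Set $\alpha = \sqrt{\inProd{w}{v}}$; note that $\inProd{w}{v} > 0$ since $w \in \interior \stdCone$ and $v$ must lie on the correct side for the half-space to be nontrivial (otherwise there is nothing to prove). Then $T = \alpha^2 Q_{w^{-1/2}} = \inProd{w}{v}\, Q_{w^{-1/2}}$ and $T^{-1} = \inProd{w}{v}^{-1} Q_{w^{1/2}}$, using $Q_{ax} = a^2 Q_x$ and $Q_x^{-1} = Q_{x^{-1}}$.

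For the first identity, I would first observe that $\inProd{\stdInt}{\stdInt/r} = 1$ so that $H(\stdInt,\stdInt/r) = \{x \in \jAlg \mid \tr x \leq 1 \}$. To check that $T$ sends this half-space onto $H(w,v)$, I will apply $T^{-1}$: a point $y$ lies in $T(H(\stdInt,\stdInt/r))$ iff $T^{-1}(y) \in H(\stdInt,\stdInt/r)$, i.e.\ iff
\[
\inProd{\stdInt}{T^{-1}(y)} = \inProd{w}{v}^{-1} \inProd{\stdInt}{Q_{w^{1/2}}(y)} \leq 1.
\]
Using that $Q_{w^{1/2}}$ is self-adjoint and $Q_{w^{1/2}}(\stdInt) = w$, this becomes $\inProd{w}{y} \leq \inProd{w}{v}$, which is exactly the defining condition of $H(w,v)$. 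This proves \eqref{prop:vol:1}.

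For the volume identity, I will combine \eqref{prop:vol:1} with $T(\stdCone) = \stdCone$ (which holds because $w^{-1/2} \in \interior \stdCone$ and $\alpha > 0$, by Proposition \ref{prop:quad}(i) and the homogeneity $Q_{\alpha u} = \alpha^2 Q_u$). This gives
\[
H(w,v) \cap \stdCone \;=\; T\bigl(H(\stdInt,\stdInt/r) \cap \stdCone\bigr),
\]
so the standard linear change-of-variables formula yields $\Vol(w,v) = |\det T| \cdot \Vol(\stdInt,\stdInt/r)$. Since $\stdCone$ is simple here, Proposition \ref{prop:quad}(iii) gives
\[
\det T \;=\; \det\bigl(\alpha^2 Q_{w^{-1/2}}\bigr) \;=\; \alpha^{2d} \cdot (\det w^{-1/2})^{2d/r} \;=\; \frac{\inProd{w}{v}^{\,d}}{(\det w)^{d/r}},
\]
using $\det(w^{-1/2}) = (\det w)^{-1/2}$ and that $Q_{w^{-1/2}}$ acts on a $d$-dimensional space so the scalar factor $\alpha^2$ contributes $\alpha^{2d}$. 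Substituting gives \eqref{prop:vol:2}.

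The only potential snag is keeping the bookkeeping of exponents and the scalar $\alpha^2$ consistent (in particular when invoking Proposition \ref{prop:quad}(iii), which was stated for $Q_x$ rather than for $a^2 Q_x$); this is a routine calculation but it is where one has to be careful. Everything else is an essentially formal application of the properties of $Q_x$.
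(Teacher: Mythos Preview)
Your proposal is correct and follows essentially the same route as the paper: both arguments use self-adjointness of $Q_{w^{1/2}}$ together with $Q_{w^{1/2}}(\stdInt)=w$ to identify $T(H(\stdInt,\stdInt/r))$ with $H(w,v)$, and then apply the change-of-variables formula with Proposition~\ref{prop:quad}(iii) to compute $\det T$. Your version is slightly more streamlined in packaging the first identity as a single ``iff'' via $T^{-1}$ rather than checking both inclusions separately, but the underlying computation is identical.
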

\begin{proof}
For the first equation, suppose that $x \in H(\stdInt,\stdInt/r)$. Then, by Proposition \ref{prop:quad}, we have 
$$\inProd{w}{Q_{{ w^{-1/2}\sqrt{\inProd{w}{v}}}}(x)} = \inProd{w}{v}\inProd{Q_{{ w^{-1/2}}}(w)}{x} = {\inProd{w}{v}\inProd{e}{x}} \leq \inProd{w}{v},$$
which shows that $Q_{{ w^{-1/2}\sqrt{\inProd{w}{v}}}} \in H(w,v) $.

Also by Proposition \ref{prop:quad}, we have $Q_{{ w^{-1/2}\sqrt{\inProd{w}{v}}}} ^{-1} = 
\frac{1}{{\inProd{w}{v}}} Q_{{ w^{1/2}}}$. So, suppose that $x \in H(w,v)$.
$$\frac{1}{{\inProd{w}{v}}} \inProd{\stdInt}{\Qr{w^{1/2}}(x)} = \frac{1}{{\inProd{w}{v}}} \inProd{\Qr{w^{1/2}}(\stdInt)}{x} = \frac{1}{{\inProd{w}{v}}} \inProd{w}{x} \leq 1,
$$
which shows that $\frac{1}{{\inProd{w}{v}}} Q_{{ w^{1/2}}}(x) \in H(\stdInt,\stdInt/r)$.

For the second equation, 

\begin{align*}
\int _{H(w,v)\cap \stdCone}1  & = \int _{Q_{{ w^{-1/2}\sqrt{\inProd{w}{v}}}}(H(\stdInt,\stdInt/r)\cap \stdCone)} 1 \\
& = \int _{H(\stdInt,\stdInt/r)\cap \stdCone } |\det Q_{{ w^{-1/2}\sqrt{\inProd{w}{v}}}}| \\
& = \det Q_{{ w^{-1/2}\sqrt{\inProd{w}{v}}}} \Vol(\stdInt,\stdInt/r)
\end{align*}
By  Proposition \ref{prop:quad}, we have 
\begin{align*}
\det  Q_{{ w^{-1/2}\sqrt{\inProd{w}{v}}}}  & = \det \inProd{w}{v} Q_{{ w^{-1/2}}} \\
& =  \inProd{w}{v}^d \det  Q_{{ w^{-1/2}}}\\
& = \inProd{w}{v}^d  (\det w^{-1/2})^{\frac{2d}{r}} \\
& = \left(\frac{\inProd{w}{v}}{ \sqrt[r]{\det w}}\right)^{d }.
\end{align*}
\end{proof}

The next lemma shows that we can use $y$ to construct a hyperplane $H(w,v)$ which contains 
$\feasPs$ and has its roots in the work by Chubanov on LPs, see page 692 of \cite{Ch15}. See also, Lemma 3.1 in \cite{KT16} for the SOCP case.
\begin{lemma}\label{lemma:y_proj}
	Suppose $x$ is feasible for \eqref{eq:feas_p_s} and that $y \in \stdCone$ satisfies
	$$
	\norm{P_\stdMap  y} \leq \const \norm{y}_{{1}},
	$$
	then $x \in H(y,\stdInt/\constInv)$.
\end{lemma}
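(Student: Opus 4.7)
The plan is to chase through the definitions and turn the projection bound into a bound on $\inProd{x}{y}$. Since we are in the simple case, $\norm{\cdot}_{1,\infty}$ collapses to $\norm{\cdot}_1$, so feasibility of $x$ gives in particular $\norm{x}_1 \leq 1$. The half-space condition $x \in H(y, \stdInt/(\rho r))$ amounts to showing
\[
\inProd{x}{y} \;\leq\; \inProd{y}{\stdInt/(\rho r)} \;=\; \tfrac{1}{\rho r}\inProd{y}{\stdInt} \;=\; \tfrac{1}{\rho r}\norm{y}_1,
\]
where the last equality uses $y \in \stdCone$.

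The key observation is that $x \in \ker \stdMap$, so $P_\stdMap x = x$, and since $P_\stdMap$ is self-adjoint,
\[
\inProd{x}{y} \;=\; \inProd{P_\stdMap x}{y} \;=\; \inProd{x}{P_\stdMap y}.
\]
Now I apply Cauchy--Schwarz in the Euclidean norm: $\inProd{x}{P_\stdMap y} \leq \norm{x}_2\,\norm{P_\stdMap y}_2$. The hypothesis gives $\norm{P_\stdMap y}_2 \leq \tfrac{1}{\rho r}\norm{y}_1$, and the spectral comparison $\norm{x}_2 = \sqrt{\sum \lambda_i(x)^2} \leq \sum |\lambda_i(x)| = \norm{x}_1 \leq 1$ (a consequence of the standard $\ell_2 \leq \ell_1$ bound on the eigenvalue vector) handles the other factor. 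Combining gives exactly $\inProd{x}{y} \leq \tfrac{1}{\rho r}\norm{y}_1$, as needed.

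There is no genuinely hard step: the only subtlety worth pausing over is the passage $\inProd{x}{y} = \inProd{x}{P_\stdMap y}$, which relies on $x \in \ker \stdMap$ and self-adjointness of the orthogonal projection. Everything else is bookkeeping, together with the trivial inequality $\norm{x}_2 \leq \norm{x}_1$ that converts the scaled constraint $\norm{x}_1 \leq 1$ into a Euclidean bound compatible with the hypothesis on $\norm{P_\stdMap y}$.
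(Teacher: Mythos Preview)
Your proof is correct and follows essentially the same route as the paper: use $P_\stdMap x = x$ and self-adjointness to replace $\inProd{x}{y}$ by $\inProd{x}{P_\stdMap y}$, then bound via a Cauchy--Schwarz-type inequality and the constraint $\norm{x}_1 \leq 1$. The only cosmetic difference is that the paper applies the generalized Cauchy--Schwarz $\inProd{P_\stdMap y}{x} \leq \norm{P_\stdMap y}_\infty \norm{x}_1$ followed by $\norm{\cdot}_\infty \leq \norm{\cdot}_2$, whereas you use the Euclidean Cauchy--Schwarz followed by $\norm{x}_2 \leq \norm{x}_1$; both chains land on the same bound $\norm{P_\stdMap y}_2$.
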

\begin{proof}
Since $y \in \stdCone$, we have $\norm{y}_{1} = \inProd{y}{\stdInt}$. In addition, 
since $x$ is feasible for \eqref{eq:feas_p_s} and $\ell = 1$, we have $\norm{x}_{1,\infty} = \norm{x}_1 \leq 1$.
Then, by 
using the  Proposition \ref{prop:inequality} and the inequality in \eqref{eq:inf}, we have
	\begin{align*}
	\inProd{y}{ x} = \inProd{y}{ P_\stdMap  x} =  \inProd{P_\stdMap y}{   x} \leq \norm{P_\stdMap  y}_{\infty} \norm{x}_{1} \leq  \norm{P_\stdMap  y}  \leq 
	\const \inProd{y}{\stdInt}. 
	\end{align*}
\end{proof}
Due to Proposition \ref{prop:vol}, as long as $y \in \interior \stdCone$, there is some $Q$ that maps $H(\stdInt,\stdInt /r)$ bijectively into $H(y,\stdInt/ \constInv)$. 
The only problem is that the volume of $H(y,\stdInt/\constInv)\cap \stdCone$ might not be sufficiently small. We will now address this issue and try to give  some intuition on the choices 
we will make ahead.

As long as $\feasPs$ is contained in $H(w,v)\cap \stdCone$, we may select any $w$ and $v$ such that 
the volume of $H(w,v)\cap \stdCone$ is small. However, finding $w$ and $v$ directly is cumbersome. 
Instead, 
we use $y$ and settle for the easier goal of finding $w,v$ such that $H(y,\stdInt /\constInv )\cap H(\stdInt, \stdInt/r)\subseteq H(w,v)$, since this is enough to ensure that $\feasPs$ is contained in $H(w,v)$. While doing so, we seek to minimize $\Vol(w,v)$. 
Theorem 22.3 of \cite{rockafellar}, tells us 
that $H(y,\stdInt /\constInv )\cap H(\stdInt, \stdInt/r)\subseteq H(w,v)$ if and only if 
there are $\alpha \geq 0$, $\beta \geq 0$ such that 
\begin{align}
\alpha y + \beta \stdInt & = w \label{eq:hyper1}\\
\alpha  \frac{\inProd{y}{\stdInt}}{\constInv}  + \beta & \leq\inProd{v}{w} \label{eq:hyper2}.
\end{align}
Furthermore, for fixed $v \in \interior \stdCone$, the $w$ that minimizes $\Vol(w,v)$ is $v^{-1}$. This is a consequence of the next proposition, which is analogous to Proposition 2.3 in \cite{KT16}.
\begin{proposition}\label{prop:min_vol}
	Fix $v \in \interior \stdCone$. Then $w = v^{-1}$ minimizes $\Vol(w,v)$. So that
	$$
	\Vol(w,v) = \left(\frac{r}{\sqrt[r]{\det v^{-1}}}\right)^{d }\Vol(\stdInt,\stdInt/r)= \left(r\sqrt[r]{\det v}\right)^{d}\Vol(\stdInt,\stdInt/r).
	$$
\end{proposition}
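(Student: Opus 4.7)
The plan is to reduce the minimization to a classical convex optimization over the cone. By Proposition \ref{prop:vol} and equation \eqref{prop:vol:2}, we have
\begin{equation*}
\Vol(w,v) = \left(\frac{\inProd{w}{v}}{\sqrt[r]{\det w}}\right)^{d} \Vol(\stdInt,\stdInt/r),
\end{equation*}
so minimizing $\Vol(w,v)$ over $w \in \interior \stdCone$ is equivalent to minimizing the scalar function $f(w) := \inProd{w}{v}/\sqrt[r]{\det w}$. The key observation is that $f$ is positively homogeneous of degree $0$ in $w$, since $\inProd{\alpha w}{v} = \alpha \inProd{w}{v}$ and $(\det \alpha w)^{1/r} = \alpha (\det w)^{1/r}$ for $\alpha > 0$. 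Thus one may restrict attention to the affine slice $\{w \in \interior \stdCone : \inProd{w}{v} = r\}$ and equivalently maximize $\log \det w$ there.

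The function $w \mapsto -\log \det w$ is the standard log-det barrier on $\interior \stdCone$ and is strictly convex, so $\log \det$ is strictly concave on the convex slice and attains a unique maximum at an interior critical point. Using the identities $\nabla \log \det w = w^{-1}$ and $\nabla \inProd{w}{v} = v$, the Lagrange condition reads $w^{-1} = \mu v$ for some $\mu > 0$, i.e., $w = \mu^{-1} v^{-1}$. Since $\jProd{v^{-1}}{v} = \stdInt$ gives $\inProd{v^{-1}}{v} = \tr \stdInt = r$, the constraint $\inProd{w}{v} = r$ forces $\mu = 1$, hence $w = v^{-1}$.

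Substituting $w = v^{-1}$ back into \eqref{prop:vol:2}, using $\inProd{v^{-1}}{v} = r$ and $\det v^{-1} = 1/\det v$, yields
\begin{equation*}
\Vol(v^{-1},v) = \left(\frac{r}{\sqrt[r]{\det v^{-1}}}\right)^{d} \Vol(\stdInt,\stdInt/r) = \left(r\sqrt[r]{\det v}\right)^{d} \Vol(\stdInt,\stdInt/r),
\end{equation*}
which is the claimed formula.

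The main obstacle is justifying the two classical Jordan-algebraic facts invoked above, namely strict concavity of $\log \det$ on $\interior \stdCone$ and the gradient formula $\nabla \log \det w = w^{-1}$. These are standard and follow from Theorem \ref{theo:spec} and Proposition \ref{prop:quad}, but are not recorded as explicit preliminaries. An elementary alternative avoids them entirely: set $u := \Qr{v^{1/2}}(w) \in \interior \stdCone$, observe via self-adjointness of $\Qr{v^{1/2}}$ together with $\Qr{v^{1/2}}(\stdInt) = v$ that $\inProd{w}{v} = \inProd{u}{\stdInt} = \tr u$, and apply the scalar AM--GM inequality $\tr u \geq r (\det u)^{1/r}$ to the eigenvalues of $u$; combining this with the identity $\det \Qr{v^{1/2}}(w) = \det v \cdot \det w$ recovers the same lower bound on $f(w)$, with equality exactly when $u$ is a positive multiple of $\stdInt$, i.e.\ when $w$ is a positive multiple of $v^{-1}$.
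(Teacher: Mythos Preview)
Your primary argument is essentially the same as the paper's: both reduce to the formula in Proposition~\ref{prop:vol}, take logarithms, and use the gradient identity $\nabla \log \det w = w^{-1}$ (which the paper cites from Proposition~\RNum{3}.4.2 of \cite{FK94}) to identify $w = v^{-1}$ as a critical point. The only differences are cosmetic: you exploit homogeneity to normalize $\inProd{w}{v}=r$ and phrase the first-order condition via a Lagrange multiplier, whereas the paper differentiates $\log\inProd{w}{v} - \tfrac{1}{r}\log\det w$ directly. Your version is somewhat more careful in that you invoke strict concavity of $\log\det$ to conclude the critical point is a genuine minimizer, while the paper simply exhibits the critical point and moves on.

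Your AM--GM alternative, on the other hand, is a genuinely different and more self-contained route. The paper does not take it. By substituting $u = \Qr{v^{1/2}}(w)$ and using $\det \Qr{v^{1/2}}(w) = \det v \cdot \det w$ (a standard Jordan-algebraic identity, though not among the paper's stated preliminaries), you reduce the inequality $\inProd{w}{v} \geq r(\det v\,\det w)^{1/r}$ to the arithmetic--geometric mean inequality on the eigenvalues of $u$, with equality exactly when $w$ is a positive multiple of $v^{-1}$. This avoids differentiating the barrier altogether and gives a cleaner characterization of the equality case, at the cost of relying on the multiplicativity of $\det$ under $\Qr{\cdot}$ rather than on the gradient formula.
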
	
\begin{proof}
	It follows from Proposition \ref{prop:vol} that minimizing $\Vol(w,v)$ is the same as 
	minimizing  $ \left(\frac{\inProd{w}{v}}{\sqrt[r]{\det w}}\right)^{d }$. We will minimize, equivalently, 
	$\log \left(\frac{\inProd{w}{v}}{\sqrt[r]{\det w}}\right)$. We have
	\begin{align*}
	f(w) = \log \left(\frac{\inProd{w}{v}}{\sqrt[r]{\det w}}\right) = \log\inProd{w}{v} - \frac{1}{r}\log \det w.
	\end{align*}
	Proposition \RNum{3}.4.2 of \cite{FK94} tells us that $\nabla \log \det w = w^{-1}$. We then have:
	\begin{align*}
	\nabla f(w) = \frac{v}{\inProd{w}{v}} -  \frac{1}{r} w^{-1}.
	\end{align*}
	In order to force $\nabla f(w) = 0$, we may take, for instance, $w = v^{-1}$ so 
	that $\inProd{w}{v} = r$. From Proposition \ref{prop:vol}, we obtain
	$$
	\Vol(w,v) = \left(\frac{r}{\sqrt[r]{\det v^{-1}}}\right)^{d }\Vol(\stdInt,\stdInt/r) = \left(r\sqrt[r]{\det v}\right)^{d}\Vol(\stdInt,\stdInt/r) .
	$$
\end{proof}
From Proposition \ref{prop:min_vol}, our quest for a good $H(w,v)$ can be summarized by trying 
to minimize 
$$
\det w^{-1} = \det(\alpha y + \beta \stdInt )^{-1},
$$ 
subject to $\alpha \geq 0, \beta \geq 0$ and $\alpha  \frac{\inProd{y}{\stdInt}}{\constInv}  + \beta \leq r$.
Note that there is no benefit in letting the last inequality be inactive. Furthermore, 
 we would rather minimize the logarithm of the function above, which gives a convex problem.
So we let $\alpha = \left(\frac{r-\beta}{\inProd{y}{\stdInt}} \right) \constInv$ and consider the following 
problem
\noindent\begin{align}
\underset{ 0 \leq \beta \leq r}{\inf} & \quad-\log \det\left(r\stdInt+  \left(\frac{r-\beta}{\inProd{y}{\stdInt}} \right) \constInv y + \stdInt\left(\beta - r\right) \right). \tag{$\mathrm{Aux}$} \label{eq:aux_prob}
\end{align}

Of course, there is no need to minimize \eqref{eq:aux_prob} exactly. We only need that $\frac{r}{\sqrt[r]{\det w}} < \delta$, or equivalently, 
$- r\log r + r\log \delta  > - \log \det w$, for some fixed positive constant $\delta < 1$ that does not depend on $\stdMap$ but could, possibly, 
depend on $r$ and $d$.
 In Theorem \ref{theo:step} we 
present a choice of $\beta$ that is good enough for our purposes. 

We first need the following auxiliary fact, which appears in some form or another in the analysis of many interior point methods dating back to the original paper by Karmakar, see Lemma 4.2 in \cite{K84}. It is in fact a consequence of the self-concordance of the $-\log \det(\cdot)$ function and can be derived from 
the bounds appearing, for instance, in Section 2.1.1 in \cite{NT08}. 
Still, for the sake of self-containment, we will not directly use self-concordance in proving the next bound.
\begin{lemma}\label{lemma:log}
	Let $h \in \jAlg$ be such that $\norm{h} < r$. Then
	\begin{align*}
		-\log \det (r \stdInt  + h) & \leq  - r \log r - \frac{\inProd{h}{\stdInt}}{r} + \frac{\norm{h}^2}{2r(r-\norm{h})}.
	\end{align*}
\end{lemma}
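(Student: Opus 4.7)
The plan is to reduce the inequality to a scalar statement about each eigenvalue of $h$ and then apply the standard Taylor-series bound for $-\log(1+t)$.

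First, I would factor out $r$ using the homogeneity of the determinant: since $\jAlg$ has rank $r$, we have $\det(r\stdInt + h) = r^r \det(\stdInt + h/r)$, and therefore
\begin{equation*}
-\log\det(r\stdInt + h) = -r\log r - \log\det(\stdInt + h/r).
\end{equation*}
Let $h = \sum_{i=1}^{r} \mu_i c_i$ be the spectral decomposition of $h$ from Theorem~\ref{theo:spec}. Because $\abs{\mu_i} \leq \norm{h} < r$, each $\mu_i/r \in (-1,1)$, so the eigenvalues $1 + \mu_i/r$ of $\stdInt + h/r$ are strictly positive and
\begin{equation*}
-\log\det(\stdInt + h/r) = -\sum_{i=1}^{r}\log\!\left(1 + \mu_i/r\right).
\end{equation*}

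Next, the core inequality I would establish is the scalar bound
\begin{equation*}
-\log(1+t) + t \;\leq\; \frac{t^2}{2(1-\abs{t})} \qquad \text{for } \abs{t}<1.
\end{equation*}
This follows by expanding $-\log(1+t) + t = \sum_{k\geq 2}\frac{(-1)^k}{k} t^k$, and then bounding in absolute value using $1/k \leq 1/2$ for $k \geq 2$ and the geometric series $\sum_{k\geq 2} \abs{t}^k = t^2/(1-\abs{t})$. Applying this with $t = \mu_i/r$ gives
\begin{equation*}
-\log\!\left(1 + \mu_i/r\right) + \frac{\mu_i}{r} \;\leq\; \frac{\mu_i^2}{2r(r-\abs{\mu_i})} \;\leq\; \frac{\mu_i^2}{2r(r-\norm{h})},
\end{equation*}
where the second inequality uses $\abs{\mu_i} \leq \norm{h}$.

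Finally, I would sum over $i$ and use $\sum_i \mu_i = \tr h = \inProd{h}{\stdInt}$ together with $\sum_i \mu_i^2 = \tr(h^2) = \norm{h}^2$ to conclude
\begin{equation*}
-\sum_{i=1}^{r} \log\!\left(1 + \mu_i/r\right) \;\leq\; -\frac{\inProd{h}{\stdInt}}{r} + \frac{\norm{h}^2}{2r(r-\norm{h})},
\end{equation*}
which combined with the initial identity yields the claim. There is no real obstacle here; the only subtle point is making sure the Taylor bound is valid in both signs of $t$, which is why the factor $1-\abs{t}$ (rather than $1-t$) appears in the denominator and propagates to $r - \norm{h}$ in the final expression.
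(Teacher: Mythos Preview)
Your proof is correct and follows essentially the same route as the paper's: reduce to eigenvalues via the spectral decomposition, apply the scalar bound $-\log(1+t)\le -t + \tfrac{t^2}{2(1-\abs{t})}$ to each $t=\mu_i/r$, use $\abs{\mu_i}\le\norm{h}$ to unify the denominators, and sum using $\sum_i\mu_i=\inProd{h}{\stdInt}$ and $\sum_i\mu_i^2=\norm{h}^2$. The only cosmetic difference is that you derive the scalar inequality via the Taylor expansion, whereas the paper simply quotes it from the literature.
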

\begin{proof}
	Note that the case $r = 1$ corresponds to $\jAlg = \Re, \stdCone = \Re_{+}, \stdInt = 1$ and is the statement that if $\alpha \in \Re$ and $-1 < \alpha < 1$, then 
	\begin{equation}
	-\log (1  + \alpha)  \leq  - \alpha + \frac{{\alpha}^2}{2(1-\abs{\alpha})}, \label{eq:log}
	\end{equation}
	which is a known bound for the logarithm function. See Lemmas 4.1 and 4.2 in \cite{K84} or Lemma 3.1 in \cite{F02}. From \eqref{eq:log} we can 
	prove the general case. Let $\lambda _1, \ldots, \lambda _r$ be the eigenvalues of $h$. We have the following relations
	\begin{align}
		\inProd{h}{\stdInt} & = \lambda _1 + \cdots + \lambda _r \label{eq:log:aux1} \\
		\norm{h}^2 & = \lambda _1 ^2 + \cdots + \lambda _r^2. \label{eq:log:aux2}
	\end{align}
	Since the corresponding eigenvalues of $r\stdInt + h$ are $r + \lambda _1, \ldots, r + \lambda _r$, we have
	\begin{align}
		-\log \det (r \stdInt  + h) & = -r\log r - \sum _{i=1}^r   \log\left(1 + \frac{\lambda _i}{r}\right). \label{eq:log2}
	\end{align}
	By hypothesis, we have $\norm{h}^2 < r^2$, which implies that $\abs{\lambda _i / r}^2 < 1$  and $\abs{\lambda _i / r} < 1$  for every $i$.
	Recalling \eqref{eq:log:aux1} and applying \eqref{eq:log} to each term of \eqref{eq:log2} we get
	\begin{align*}
	-\log \det (r \stdInt  + h) & \leq -r\log r - \frac{\inProd{h}{\stdInt}}{r} + \sum _{i=1}^r \frac{\lambda _{i}^2}{2r^2(1- \abs{\lambda_i/r})}.
	\end{align*}
	Finally, due to \eqref{eq:log:aux2} and the fact that $1- \abs{\lambda_i/r} \geq 1- \norm{h}/r > 0$ holds for all $i$, we have the desired inequality. 
	\end{proof}
We are now ready to show how to construct $H(w,v)$ satisfying the properties \ref{p:1}, \ref{p:2} \ref{p:3} outlined in Section \ref{sec:int}.
\begin{theorem}\label{theo:step}
Let $\rho > 1$ and $y \in \stdCone$ be such that $\feasPs \subseteq H(y,\stdInt/\rho r)$ and $y \neq 0$. Let\footnote{If $y\neq 0$ and $y \in \stdCone$ then $\inProd{y}{\stdInt}> 0$, since the latter is sum of the eigenvalues of $y$, which are nonnegative. Furthermore, not all of them are zero, since $y \neq 0$.}
\begin{align*}
\beta & =  r- \left(\frac{1}{\rho} - \frac{1}{\sqrt{\rho(3\rho - 2)}}\right)\\
w & = \left(\frac{r-\beta}{\inProd{y}{\stdInt}} \right) \constInv y + \beta \stdInt\\
v & = w^{-1}.
\end{align*}
Then, the following hold:
\begin{enumerate}[label=({\it \roman*})]
	\item $\feasPs \subseteq H(y,\stdInt/\rho r)\cap H(\stdInt,\stdInt/r) \subseteq H(w,v)$
	\item $Q_{w^{-1/2}\sqrt{r}}(H(\stdInt,\stdInt/r))  = H(w,v)$
	\item $\Vol(w,v) = \left(\frac{r}{\sqrt[r]{\det w}}\right)^{d }\Vol(\stdInt,\stdInt/r) \leq \left( {\exp({-\varphi(\rho)/r})}\right)^{d} \Vol(\stdInt,\stdInt/r)$, where 
	$$
	\varphi(\rho) = 2 - \frac{1}{\rho} - \sqrt{3- \frac{2}{\rho}}  .
	$$
	In particular if $\rho \geq 2$, we have $\Vol(w,v) <  \left( 0.918 \right)^{d/r}\Vol(\stdInt,\stdInt/r)$.
\end{enumerate}
\end{theorem}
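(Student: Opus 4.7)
The plan is to prove the three items in turn, leveraging the groundwork laid by Propositions~\ref{prop:vol} and~\ref{prop:min_vol} and Lemma~\ref{lemma:log}. Throughout, I set $\tau = r-\beta$, so that $w = \tau\rho r\,\hat y + \beta \stdInt$ with $\hat y = y/\inProd{y}{\stdInt}$, and I first observe that the bracketed quantity defining $\beta$ is positive (since $\rho(3\rho-2) > \rho^2$ when $\rho>1$), so $0 < \beta < r$ and in particular $\tau > 0$.

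For item $(i)$, the inclusion $\feasPs \subseteq H(y,\stdInt/\rho r)$ is just the hypothesis. For $\feasPs \subseteq H(\stdInt,\stdInt/r)$, since $\ell=1$ and $x\in\stdCone$ implies $\norm{x}_1 = \inProd{x}{\stdInt}$, the constraint $\norm{x}_{1,\infty} \le 1$ gives $\inProd{x}{\stdInt} \le 1 = \inProd{\stdInt}{\stdInt/r}$. For the second inclusion, I apply the Rockafellar-style condition \eqref{eq:hyper1}--\eqref{eq:hyper2} with $\alpha = (\tau/\inProd{y}{\stdInt})\rho r \ge 0$ and the given $\beta \ge 0$: identity \eqref{eq:hyper1} holds by definition of $w$, and \eqref{eq:hyper2} reduces to $\tau + \beta = r$, which matches $\inProd{w}{v} = \inProd{w^{-1}}{w} = r$.

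Item $(ii)$ is immediate from Proposition~\ref{prop:vol}: since $\inProd{w}{v}=r$, we have $\sqrt{\inProd{w}{v}} = \sqrt{r}$ and $Q_{w^{-1/2}\sqrt{r}}(H(\stdInt,\stdInt/r)) = H(w,v)$. The volume formula in $(iii)$ likewise follows from Proposition~\ref{prop:vol} (or Proposition~\ref{prop:min_vol}) once $\inProd{w}{v}=r$ is noted. The substantive content of $(iii)$ is the lower bound on $\det w$. I set $h = w - r\stdInt = \tau(\rho r\,\hat y - \stdInt)$. Since $\hat y \in \stdCone$ has $\inProd{\hat y}{\stdInt} = 1$, its eigenvalues $\mu_i$ satisfy $\mu_i \ge 0$ and $\sum \mu_i = 1$, so $\norm{\hat y}^2 = \sum \mu_i^2 \le \sum \mu_i = 1$. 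This yields $\inProd{h}{\stdInt} = \tau r(\rho-1)$ exactly and $\norm{h}^2 \le \tau^2(\rho^2 r^2 - 2\rho r + r) \le \tau^2 \rho^2 r^2$ (the last step uses $\rho > 1/2$). In particular, $\norm{h} \le \tau \rho r$, and one checks $\tau \rho = 1 - \sqrt{\rho/(3\rho-2)} < 1$, so Lemma~\ref{lemma:log} applies and gives
\[
-\log \det w \;\le\; -r\log r - \tau(\rho-1) + \frac{\tau^2 \rho^2}{2(1-\tau\rho)}.
\]

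The final, and main, obstacle is purely algebraic: showing the right-hand side equals $-r\log r - \varphi(\rho)$, i.e., that $\beta$ has been chosen to extract the cleanest constant. The trick is the substitution $s = \sqrt{\rho/(3\rho-2)}$, under which $\tau\rho = 1-s$, $\tau = (1-s)/\rho$, and $1/\rho = (3s^2-1)/(2s^2)$. A direct computation then shows both $\tau(\rho-1) - \tau^2\rho^2/(2(1-\tau\rho))$ and $\varphi(\rho) = 2 - 1/\rho - 1/s$ equal $(1-s)^2/(2s^2)$, establishing equality. Exponentiating gives $\sqrt[r]{\det w} \ge r\,e^{\varphi(\rho)/r}$, hence the claimed volume bound. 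The numerical statement for $\rho = 2$ follows by evaluating $\varphi(2) = 3/2 - \sqrt{2}$ and noting $e^{-\varphi(2)} < 0.918$.
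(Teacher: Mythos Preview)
Your proof is correct and follows essentially the same route as the paper: items $(i)$ and $(ii)$ via the conic-combination condition \eqref{eq:hyper1}--\eqref{eq:hyper2} and Proposition~\ref{prop:vol}, and item $(iii)$ by writing $w = r\stdInt + h$ and invoking Lemma~\ref{lemma:log} with the same bounds $\inProd{h}{\stdInt}/r = \tau(\rho-1)$ and $\norm{h}\le \tau\rho r$. Your $s$-substitution to verify the resulting constant is cleaner than the paper's appeal to a computer algebra system; the only omission is that the ``in particular'' clause is stated for all $\rho \ge 2$, so you should add the observation that $\varphi$ is increasing on $(1,\infty)$ (indeed $\varphi'(\rho) = \rho^{-2}\bigl(1 - (3-2/\rho)^{-1/2}\bigr) > 0$ since $3-2/\rho > 1$).
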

\begin{proof}
Following the discussion so far, we consider the problem \eqref{eq:aux_prob}. 
\noindent\begin{align}
\underset{ 0 \leq \beta \leq r}{\inf} & \quad-\log \det\left(r\stdInt+  \left(\frac{r-\beta}{\inProd{y}{\stdInt}} \right) \constInv y + \stdInt\left(\beta - r\right) \right). \tag{$\mathrm{Aux}$} 
\end{align} 
To avoid making it seem as if $\beta$ appeared from thin air, we will try to show the rationale behind 
our choice of $\beta$.
Our task is to show that there is some $0 \leq \beta \leq r$ and some fixed constant $\delta < 1$ for which
\begin{equation*}
\frac{r}{\sqrt[r]{\det w}} < \delta.
\end{equation*}
Or, equivalently,
\begin{align*}
-\log \det\left(r\stdInt+  \left(\frac{r-\beta}{\inProd{y}{\stdInt}} \right) \constInv y + \stdInt\left(\beta - r\right) \right) \leq -r\log r + r\log \delta.
\end{align*} 
Once we find  $\beta$ and $\delta$ we can claim that $\Vol(w,v) \leq \delta ^{d}\Vol(\stdInt,\stdInt/r)$. 
 Furthermore, by construction, 
we have item $(i)$, since $\beta$ and $\alpha = \left(\frac{r-\beta}{\inProd{y}{\stdInt}} \right) \constInv$ are nonnegative and solve the linear 
inequalities \eqref{eq:hyper1} and \eqref{eq:hyper2}. Item $(ii)$ is just a consequence 
of Proposition \ref{prop:vol}.
To start, let $h = \left(\frac{r-\beta}{\inProd{y}{\stdInt}} \right) \constInv y + \stdInt\left(\beta - r\right)$. We have
\begin{align}
h & =   \left(r - \beta  \right)\left( \frac{\constInv}{\inProd{y}{\stdInt}} y - \stdInt\right) \notag \\
\frac{\inProd{h}{\stdInt}}{r} & =   (r - \beta)(\rho - 1) \label{eq:in_bd}\\
\norm{h}^2 & = \left( r - \beta  \right)^2 \left(r(1-2\rho) +  \rho^2r^2 \frac{\norm{y}^2}{\inProd{y}{\stdInt}^2}\right) \notag
\end{align}
As $ \norm{y} \leq \inProd{y}{\stdInt}$ and $\rho > 1$, we get the bound 
\begin{equation}
\norm{h} \leq (r -\beta )\rho r. \label{eq:h_bd}
\end{equation}
 So in order to apply 
Lemma \ref{lemma:log}, it is enough to take $(r- \beta)\rho r < r$. So, suppose that  $r-\beta < 1/\rho$. This, together with 
\eqref{eq:h_bd}, implies that
\begin{equation}
 \frac{1}{r(1 - \rho(r -\beta ))} \geq \frac{1}{r - \norm{h}}. \label{eq:h_bd2}
\end{equation}
Then, \eqref{eq:in_bd}, \eqref{eq:h_bd} and \eqref{eq:h_bd2} together with Lemma \ref{lemma:log} imply that:
\begin{align*}
-\log \det (r \stdInt  + h)  & \leq  - r \log r - (r - \beta)(\rho - 1) + \frac{(r-\beta)^2\rho^2r^2}{2r^2(1 - \rho(r -\beta ))} \\
& = - r \log r - (r - \beta)(\rho - 1) + \frac{(r-\beta)^2\rho^2}{2(1 - \rho(r -\beta ))}.
\end{align*}
Let $\psi(z) = -z(\rho - 1) + \frac{z^2\rho ^2}{2(1-\rho z)}$. A tedious but straightforward 
calculation\footnote{It is easy to verify this computation by using a computer algebra 
	system such as the open-source software \texttt{maxima} \cite{maxima}.  Consider the following three \texttt{maxima} commands: {\texttt{f:-z*(p-1)+(z\textasciicircum 2*p\textasciicircum 2)/(2*(1-p*z));
		zp:solve(diff(f,z,1),z);
		for i:1 thru 2 do disp(combine(expand(ratsimp(substitute(zp[i],z,f)))));}}. The first two commands compute $z_{\rho}$ and the last computes $\psi (z_{\rho})$. Note that we discard one of the solutions because it is negative.}  shows that in the interval $(0,1/\rho)$, $\psi(z)$ is minimized at 
$$z_\rho = \frac{1}{\rho} - \frac{1}{\sqrt{\rho(3\rho - 2)}}. $$ 
and for 
that $z_\rho $ we have $ \psi(z_\rho) = \sqrt{3- \frac{2}{\rho}} + \frac{1}{\rho} - 2$. So, we let $r- \beta = z_\rho$ and conclude that for this choice of $\beta$, we 
have  
\begin{align*}
-\log \det (r \stdInt  + h)  & \leq - r \log r - \left(2 - \frac{1}{\rho} - \sqrt{3- \frac{2}{\rho}}   \right).
\end{align*}
Therefore, we take $\delta$ such that $r\log \delta = \psi(z_\rho)$.
Thus we conclude that $\Vol(w,v) \leq \left( {\exp({-\varphi(\rho)/r})}\right)^{d}\Vol(\stdInt,\stdInt/r)$, 
where $\varphi(\rho) = -\psi(z_\rho)$. Examining the first derivative of 
$-\varphi$, we see that it is a decreasing function. In particular, 
if $\rho \geq 2$, we have $\exp({-\varphi(\rho)}) < 0.918$.
\end{proof}
\begin{figure}
	\centering
\includegraphics[scale=0.5]{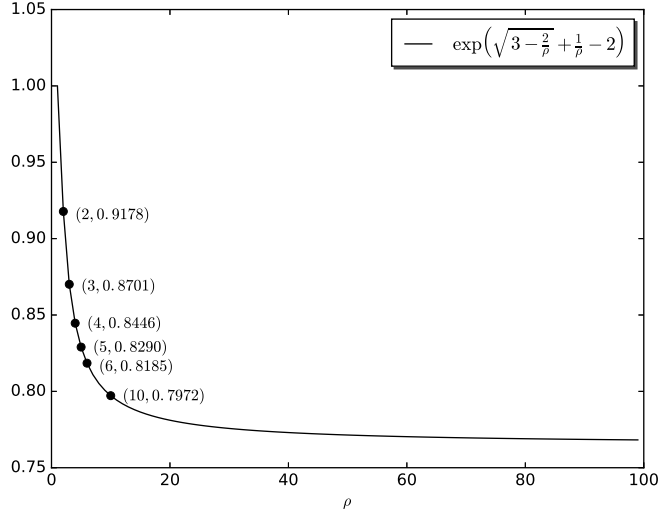}
\caption{The function $\exp(-\varphi(\rho))$. The decrease in 
	volume is bounded by $\exp(-\varphi(\rho))^{\frac{d}{r}}$.}\label{fig:vol}
\end{figure}

See Figure \ref{fig:vol} for the graph of $\exp({-\varphi(\rho)})$.
\subsection{The general case}\label{sec:vol:general}
Now, we move on to the general case and we suppose that $\stdCone = \stdCone _1 \times \cdots \times \stdCone _\ell$, where each $\stdCone _i$ is 
a simple symmetric cone of rank $r_i$ and dimension $d_i$. We have $r = r_1 + \ldots + r_\ell$ and $d = d_1 + \ldots + d_\ell$. Suppose
that we have found some nonzero $y \in \stdCone$ satisfying 
$$
\norm{P_\stdMap  y} \leq \frac{1}{2r_{\max}\sqrt{\ell}}\norm{y}_{{1,\infty}},
$$
where we recall that $r_{\max} = \max \{r_1,\ldots, r_\ell \}$ and that since $y \in \stdCone$, we have $\norm{y}_{{1,\infty}} = \max\{\inProd{\stdInt_1}{y_1},\ldots,\inProd{\stdInt_\ell}{y_\ell}\}$, see Section \ref{sec:norms}. We have the following observation.

\begin{lemma}\label{lemma:y_proj2}
	Suppose $x$ is feasible for \eqref{eq:feas_p_s} and that $y \in \stdCone$ satisfies $P_\stdMap y \neq 0$ and
		$$
		\norm{P_\stdMap  y} \leq \frac{1}{2r_{\max} \sqrt{\ell}} \norm{y}_{{1,\infty}}.
		$$
	For every $k \in \{1,\ldots, \ell\}$, define $\rho _k$ as 
	\begin{equation}
	\rho _k = \frac{\norm{y_k}_1}{ r_k\norm{P_\stdMap  y}\sqrt{\ell}}. \label{eq:rho_k}
	\end{equation}
\begin{enumerate}[label=({\it \roman*})]
	\item Let $k$ be any index such that $\norm{y}_{1,\infty} = \norm{y_k}_{1}$, then $\rho _k \geq 2$.
	\item Let $x$ be feasible for \eqref{eq:feas_p_s}, then  $x_k \in H(y_k,\stdInt_k/\rho _k r_k)$.
\end{enumerate}
	
\end{lemma}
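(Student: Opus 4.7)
The plan is to treat the two items in turn, noting that they hinge on different ingredients. For item $(i)$, I would just substitute the standing hypothesis $\norm{P_\stdMap y} \leq \frac{1}{2r_{\max}\sqrt{\ell}}\norm{y}_{{1,\infty}}$ directly into the definition \eqref{eq:rho_k}. Since $k$ is chosen so that $\norm{y_k}_1 = \norm{y}_{1,\infty}$, this yields
\[
\rho _k \;=\; \frac{\norm{y_k}_1}{r_k\norm{P_\stdMap y}\sqrt{\ell}} \;\geq\; \frac{\norm{y}_{1,\infty}}{r_k \cdot \tfrac{1}{2 r_{\max}\sqrt{\ell}}\norm{y}_{1,\infty} \cdot \sqrt{\ell}} \;=\; \frac{2 r_{\max}}{r_k} \;\geq\; 2,
\]
using $r_k \leq r_{\max}$ in the last step.

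For item $(ii)$, I first unpack the target: showing $x_k \in H(y_k,\stdInt_k/\rho_k r_k)$ is the same as showing $\inProd{y_k}{x_k} \leq \inProd{y_k}{\stdInt_k/(\rho_k r_k)}$. Since $y_k \in \stdCone_k$, we have $\inProd{y_k}{\stdInt_k} = \norm{y_k}_1$, and plugging in the definition \eqref{eq:rho_k} turns this into the clean-looking inequality $\inProd{y_k}{x_k} \leq \sqrt{\ell}\,\norm{P_\stdMap y}$. So the whole task reduces to proving this single bound.

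The key trick is that $P_\stdMap$ does not respect the block decomposition, so a blockwise projection argument is not available. Instead, I would dominate the $k$-th term by the full trace: because each $y_i, x_i \in \stdCone_i$ and $\stdCone_i$ is self-dual, each $\inProd{y_i}{x_i} \geq 0$, hence $\inProd{y_k}{x_k} \leq \sum_{i=1}^\ell \inProd{y_i}{x_i} = \inProd{y}{x}$. Then, since $\stdMap x = 0$ implies $P_\stdMap x = x$, self-adjointness of $P_\stdMap$ gives $\inProd{y}{x} = \inProd{P_\stdMap y}{x}$. Applying the Hölder-type bound \eqref{eq:1_inf} together with Proposition \ref{prop:inequality}(iii) and the feasibility constraint $\norm{x}_{1,\infty} \leq 1$ finishes the argument:
\[
\inProd{P_\stdMap y}{x} \;\leq\; \norm{P_\stdMap y}_{\infty,1}\,\norm{x}_{1,\infty} \;\leq\; \sqrt{\ell}\,\norm{P_\stdMap y}.
\]
The main obstacle is precisely this mismatch between the global projection and the block structure; once one notices that nonnegativity of every block inner product lets one trade $\inProd{y_k}{x_k}$ for the full $\inProd{y}{x}$, the rest is a routine invocation of the dual-norm inequalities developed in Section \ref{sec:norms}.
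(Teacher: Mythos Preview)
Your proof is correct and follows essentially the same approach as the paper's own argument: both items are handled identically, with item $(ii)$ proceeding via $\inProd{y_k}{x_k} \leq \inProd{y}{x} = \inProd{P_\stdMap y}{x}$ followed by the H\"older-type bound \eqref{eq:1_inf} and Proposition~\ref{prop:inequality}(iii). If anything, you are slightly more explicit than the paper in justifying the step $\inProd{y_k}{x_k} \leq \inProd{y}{x}$ through self-duality of the blocks, which the paper's chain of inequalities leaves to the reader.
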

\begin{proof}
\begin{enumerate}[label=({\it \roman*})]
	\item By the definition of $\rho_k$, we see that $\rho _k \geq 2$ if $\norm{y_k}_1 = \norm{y}_{1,\infty}$, since $r_k \leq r_{\max}$.
	
	\item Since $y \in \stdCone$, we have $\norm{y_k}_{1} = \inProd{y_k}{\stdInt_k}$. 
Recall that $P_\stdMap$ is self-adjoint, so that $ \inProd{y}{ P_\stdMap  x} =  \inProd{P_\stdMap y}{  x} $. In addition, since $x \in  \feasPs$, we have $\norm{x}_{1,\infty} \leq 1$. Therefore, using 
	the  Proposition \ref{prop:inequality} together with the generalized Cauchy-Schwarz inequality in \eqref{eq:1_inf} we have:
	\begin{align}
	\inProd{y_k}{x_k} \leq \inProd{y}{ x} = \inProd{y}{ P_\stdMap  x} \leq \norm{P_\stdMap  y}_{\infty,1} \norm{x}_{1,\infty} \leq   \norm{P_\stdMap  y}_{2}\sqrt{\ell} =  \frac{\norm{y_k}_1}{\rho_k r_k} = \frac{\inProd{y_k}{\stdInt_k}}{\rho_k r_k}. \notag
	\end{align}
	\end{enumerate}

	\end{proof}
As a reminder, we write $\Vol(w_k,v_k)$ for the volume of the region 
$H(w_k,v_k) \cap \stdCone _k$.
The idea is that once $y$ is found, we can use $y_k$  and Theorem \ref{theo:step} to find a hyperplane $H(w_k,v_k) \subseteq \jAlg _k$, such that  $\Vol(w_k,v_k)$ is small. 
This is summarized in the following theorem.

\begin{theorem}\label{theo:vol2}
Let $y \in \stdCone$ be such that 	$
\norm{P_\stdMap  y} \leq \frac{1}{2r_{\max} \sqrt{\ell}} \norm{y}_{{1,\infty}}$ and that $P_\stdMap y \neq 0$. 
For all blocks $\jAlg _k$, let $\rho _k$ be as in Equation \eqref{eq:rho_k} and suppose that 
for some $k$ we have $\rho _k > 1$. Let $w_k,v_k \in \stdCone _k, \beta_k \in \Re$ be such that\footnote{$P_\stdMap y \neq 0$ implies $y \neq 0$, which implies $\inProd{y}{\stdInt}> 0$, since $y \in \stdCone$. See the footnote in Theorem \ref{theo:step}.}
\begin{align*}
\beta_k & =  r_k- \left(\frac{1}{\rho _k} - \frac{1}{\sqrt{\rho _k(3\rho _k - 2)}}\right)\\
w_k & = \left(\frac{r_k-\beta_k}{\inProd{y_k}{\stdInt_k}} \right) \rho _k r_k y_k + \beta_k \stdInt _k\\
v_k & = w_k^{-1}.
\end{align*}
Then, the following hold:
\begin{enumerate}[label=({\it \roman*})]
	\item If $x \in \feasPs$, then $x_k \in  H(w_k,v_k)$,
	\item $Q_{w_k^{-1/2}\sqrt{r_k}}(H(\stdInt_k,\stdInt_k/r_k))  = H(w_k,v_k)$,
	\item $\Vol(w_k,v_k) = \left(\frac{r_k}{\sqrt[r_k]{\det w_k}}\right)^{d_k }\Vol(\stdInt_k,\stdInt_k/r_k) \leq \left( {\exp({-\varphi(\rho_k)/r_k})}\right)^{d_k} \Vol(\stdInt_k,\stdInt_k/r_k)$, where 
	$$
	\varphi(\rho _k) = 2 - \frac{1}{\rho}  - \sqrt{3- \frac{2}{\rho}}.
	$$
	In particular if $\rho _k \geq 2$, we have $\Vol(w_k,v_k) <  \left( 0.918 \right)^{d_k/r_k}\Vol(\stdInt_k,\stdInt_k/r_k)$.
\end{enumerate}
\end{theorem}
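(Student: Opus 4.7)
The plan is to reduce Theorem~\ref{theo:vol2} to Theorem~\ref{theo:step} applied block-by-block on the simple cone $\stdCone_k$ inside $\jAlg_k$. The parameters $(\rho,r,d,y,\stdInt)$ in the simple case will be replaced by their block analogues $(\rho_k,r_k,d_k,y_k,\stdInt_k)$, and the decomposition of the Jordan algebra (together with the fact that $\Qr{\cdot}$, $\det$, $\tr$, and $\stdInt$ all decompose block-wise, as noted in Section~\ref{sec:pre}) will guarantee that the simple-case argument applies to each block verbatim.

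To begin, I would verify that the hypotheses of Theorem~\ref{theo:step} transfer to the $k$-th block. Since $y \in \stdCone$, we have $y_k \in \stdCone_k$. From the definition of $\rho_k$ in \eqref{eq:rho_k} and the hypothesis $P_\stdMap y \neq 0$, the assumption $\rho_k > 1$ forces $\norm{y_k}_1 > 0$, hence $y_k \neq 0$ and in particular $\inProd{y_k}{\stdInt_k} > 0$. Next, I need the block analogue of the containment $\feasPs \subseteq H(y,\stdInt/\rho r)$. Lemma~\ref{lemma:y_proj2}(ii) gives exactly this, namely that $x_k \in H(y_k,\stdInt_k/\rho_k r_k)$ for every $x \in \feasPs$. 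Moreover, the scaling constraint $\norm{x}_{1,\infty} \leq 1$ together with $x_k \in \stdCone_k$ implies $\inProd{x_k}{\stdInt_k} = \norm{x_k}_1 \leq 1$, so $x_k \in H(\stdInt_k,\stdInt_k/r_k)$ as well. Thus for any $x \in \feasPs$ we have $x_k \in H(y_k,\stdInt_k/\rho_k r_k)\cap H(\stdInt_k,\stdInt_k/r_k)$.

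Now I would run the argument of Theorem~\ref{theo:step} inside $\jAlg_k$. Setting $\alpha_k = \bigl((r_k-\beta_k)/\inProd{y_k}{\stdInt_k}\bigr)\rho_k r_k$, the prescribed $w_k$ equals $\alpha_k y_k + \beta_k \stdInt_k$ with $\alpha_k,\beta_k \geq 0$ (provided $r_k - \beta_k = 1/\rho_k - 1/\sqrt{\rho_k(3\rho_k-2)} \in (0,1/\rho_k)$, which holds for $\rho_k > 1$) and satisfying $\alpha_k \inProd{y_k}{\stdInt_k}/(\rho_k r_k) + \beta_k = r_k = \inProd{w_k}{v_k}$. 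By Theorem~22.3 of \cite{rockafellar} this gives $H(y_k,\stdInt_k/\rho_k r_k)\cap H(\stdInt_k,\stdInt_k/r_k) \subseteq H(w_k,v_k)$, yielding item~(i). Item~(ii) is a direct application of Proposition~\ref{prop:vol} in the simple algebra $\jAlg_k$ (which has rank $r_k$), since $v_k = w_k^{-1}$. For item~(iii), Proposition~\ref{prop:min_vol} applied in $\jAlg_k$ gives the equality; the remaining inequality is the self-concordance-style estimate from Lemma~\ref{lemma:log} applied to $h_k = (r_k-\beta_k)(\rho_k r_k y_k/\inProd{y_k}{\stdInt_k} - \stdInt_k)$, exactly as in Theorem~\ref{theo:step}, with the bound $\norm{h_k} \leq (r_k - \beta_k)\rho_k r_k$ following from $\norm{y_k} \leq \inProd{y_k}{\stdInt_k}$ (valid because $y_k \in \stdCone_k$).

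The main thing to check is really only bookkeeping: the entire simple-case analysis of Theorem~\ref{theo:step} depends on $y$ only through the facts that $y \in \stdCone$, $y \neq 0$, and $\feasPs$ lies in a certain halfspace driven by $y$; the blockwise decomposition of $\Qr{\cdot}$, $\det$, $\tr$, and $\stdInt$ means each of these ingredients is self-contained within $\jAlg_k$, so no cross-block interaction creeps in. The only substantive step is the reduction of the global containment $\feasPs \subseteq H(y_k,\stdInt_k/\rho_k r_k)$ (in the $k$-th coordinate) to a halfspace in $\jAlg_k$, which is supplied by Lemma~\ref{lemma:y_proj2}(ii) and uses the spectral norm inequality \eqref{eq:1_inf} together with the scaling $\norm{x}_{1,\infty} \leq 1$. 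Everything else follows by invoking Theorem~\ref{theo:step} with the appropriate block-level substitution.
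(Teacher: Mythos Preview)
Your proposal is correct and follows essentially the same approach as the paper: use Lemma~\ref{lemma:y_proj2} to confine $x_k$ to $H(y_k,\stdInt_k/\rho_k r_k)\cap H(\stdInt_k,\stdInt_k/r_k)$, then invoke Theorem~\ref{theo:step} block-wise with $(\rho,r,d,y,\stdInt)$ replaced by $(\rho_k,r_k,d_k,y_k,\stdInt_k)$. You spell out the hypothesis checks (nonvanishing of $y_k$, the range of $r_k-\beta_k$, and the origin of $x_k\in H(\stdInt_k,\stdInt_k/r_k)$) more explicitly than the paper does, but the route is identical.
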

\begin{proof}
Due to Lemma  \ref{lemma:y_proj2}, we have that if $x \in \feasPs$, then 
$x_k$ is confined to $H(y_k,\stdInt_k/\rho_k r_k) \cap H(\stdInt_k, \stdInt_k/r_k)$.
Then, the choice of $\beta_k, w_k, v_k$ is exactly the same as suggested by Theorem \ref{theo:step}.
This shows that $H(y_k,\stdInt_k/\rho_k r_k) \cap H(\stdInt_k, \stdInt_k/r_k) \subseteq H(w_k,v_k)$, which implies $x_k \in H(w_k,v_k)$, which is item $(i)$. Item $(ii)$ and $(iii)$ are also 
direct consequences of items $(ii)$ and $(iii)$ of Theorem \ref{theo:step}.
\end{proof}
Then, the idea is to replace $\stdMap$ by $\stdMap Q$, where $Q$ is the linear map such that 
$$
Q(x_1,\ldots,x_k,\ldots,x_\ell) = \left(Q_1(x_1),\ldots,Q_\ell (x_\ell)\right),
$$
where 
$$
Q_{i} = \begin{cases}
Q_{w_i^{-1/2}\sqrt{r_i}} & \text{if} \quad  \rho _i > 1 \\
I _i & \quad \text{otherwise}, \\
\end{cases}
$$
where $I_i$ is the identity map on $\jAlg _i$. If $y$ satisfying Proposition 
\ref{theo:vol2} is found, $\rho _k \geq 2$ holds for at least one block. Therefore, 
for at least one block the decrease in volume is bounded above by a positive constant 
less than $1$. 
This is the foundation upon which Algorithm \ref{alg:main} in Section \ref{sec:full} is constructed.

\section{Basic Procedure}\label{sec:bp}
We remark that the procedure described here is a direct extension of Chubanov's basic procedure in 
\cite{Ch15}, see also  Section 3 in \cite{KT16} for 
the corresponding discussion on SOCPs.
From Section \ref{sec:vol} and Theorem \ref{theo:vol2}, we see that as long as we are able to find $y$ such that $\norm{P_\stdMap  y} \leq \frac{1}{2r_{\max}\sqrt{\ell}}\norm{y}_{{1,\infty}}$, we can confine one of the blocks of the feasible solutions of \eqref{eq:feas_p_s} to a region 
$H(w_k,v_k)\cap \stdCone_k$ with smaller volume than $H(\stdInt_k,\stdInt_k/r_k)\cap \stdCone_k$.

The ``basic procedure'' can be seen as an algorithm for minimizing $\norm{P_\stdMap  y}$, but we early stop it and pass control forward after the threshold $\norm{P_\stdMap  y} \leq \frac{1}{2r_{\max}\sqrt{\ell}}\norm{y}_{{1,\infty}}$ is met. 
So suppose that we start with some $y \in \interior \stdCone$ such that $\inProd{y}{\stdInt} = 1$.
We then let $z = P_\stdMap  y$.

If $z  = 0$, then $y \in (\ker \stdMap)^\perp$, so $y = \stdMap^\T u$ for some $u$, which means that $y$ is feasible for \eqref{eq:feas_d} since $y \in \stdCone$ and $y$ is not zero. If $z \in \interior \stdCone$, then since $\stdMap z = 0$, we have that $z$ is feasible for \eqref{eq:feas_p}. If 
neither of those criteria are met but $\norm{z} \leq \frac{1}{2r_{\max}\sqrt{\ell}}\norm{y}_{1,\infty}$ holds, then we stop the algorithm anyway.

Otherwise, we have $z  \neq 0$ and $z \not \in \interior \stdCone$. Because $z \not \in \interior \stdCone$, 
there is a nonzero $c \in \stdCone$ such that $\inProd{z}{c} \leq 0$, which is a consequence of standard separation results and of the 
fact that $\stdCone$ is self-dual. Since any multiple of $c$ will do the job, we can scale it and suppose that 
$\inProd{\stdInt}{c} = 1$. 
Then, it seems sensible to try to correct $y$ by considering 
$$
y' = \alpha y + (1-\alpha) c,
$$
where we select $\alpha$ in such a way that $P_\stdMap y'$ is as close as possible to the origin, since we wish to minimize $\norm{P_\stdMap y}$. Let $p = P_\stdMap c$. The $\alpha$ that minimizes $\norm{P_\stdMap y'}$ is 
$$
\alpha=\frac{\inProd{p}{p - z}}{\norm{z-p}^2}.
$$
Note that after computing $p$, we may check whether $p$ meets our stopping criteria. That is, 
we check whether $p = 0$ (so that $c$ is feasible for \eqref{eq:feas_d}), $p \in \interior \stdCone$ (so that $p$ is feasible for \eqref{eq:feas_p}) or $\norm{p} \leq \frac{1}{2r_{\max}\sqrt{\ell}}\norm{c}_{1,\infty}$. If neither $p$ nor $z$ satisfies our stopping criteria, then $\alpha \in (0,1)$ so that $y' \in \interior \stdCone$, see 
Theorem 6.1 in \cite{rockafellar}. Moreover, this update decreases the norm of $z$, as described by the following proposition.
\begin{proposition}\label{prop:bp}
Let:
\begin{enumerate}[label=({\it \roman*})]
	\item $y \in \interior \stdCone $, $\inProd{\stdInt}{y} = 1$ and $z = P_\stdMap y$,
	\item $c \in \stdCone$ be such that $\inProd{z}{c} \leq 0$ and $\inProd{\stdInt}{c} = 1$,
	\item $p = P_\stdMap c$ and $\alpha=\frac{\inProd{p}{p - z}}{\norm{z-p}^2}$,
	\item $y' = \alpha y + (1-\alpha)c$.
\end{enumerate}
Suppose that $p\neq 0$ and $z \neq 0$. Then:
\begin{enumerate}[label=({\alph*})]
	\item $\frac{1}{\norm{P_\stdMap y'}^2} \geq \frac{1}{\norm{P_\stdMap y}^2} + 1$.
		\item $y' \in \interior \stdCone $, $\inProd{\stdInt}{y'} = 1$ and $ \norm{y'}_{1,\infty} \geq \frac{1}{\ell}$,
\end{enumerate}
\end{proposition}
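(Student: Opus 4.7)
The plan is to prove (a) and (b) separately, with the key structural observation (used in both) being the identity $\inProd{z}{p} \leq 0$. Indeed, since $P_\stdMap$ is the orthogonal projection onto $\ker \stdMap$, it is self-adjoint and idempotent, so $\inProd{z}{p} = \inProd{P_\stdMap y}{P_\stdMap c} = \inProd{P_\stdMap y}{c} = \inProd{z}{c} \leq 0$ by hypothesis. Equivalently, writing $c = p + (c-p)$ with $c-p \in (\ker \stdMap)^{\perp}$ and $z \in \ker \stdMap$ gives the same identity.

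For part (a), I would observe that $P_\stdMap y' = \alpha z + (1-\alpha) p$ by linearity, and the given $\alpha$ is exactly the minimizer of $\norm{t z + (1-t) p}^2$ over $t \in \Re$. A direct computation (or the distance-from-origin-to-line formula) yields
\[
\norm{P_\stdMap y'}^2 \;=\; \frac{\norm{z}^2 \norm{p}^2 - \inProd{z}{p}^2}{\norm{z-p}^2}.
\]
Taking reciprocals and expanding $\norm{z-p}^2 = \norm{z}^2 + \norm{p}^2 - 2\inProd{z}{p}$, the inequality $\tfrac{1}{\norm{P_\stdMap y'}^2} \geq \tfrac{1}{\norm{z}^2} + \tfrac{1}{\norm{p}^2}$ reduces, after clearing denominators, to $2\inProd{z}{p}\norm{z}^2\norm{p}^2 \leq (\norm{z}^2 + \norm{p}^2)\inProd{z}{p}^2$, which holds because the left side is nonpositive and the right side is nonnegative. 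It then remains to control $\norm{p}$: since $c \in \stdCone$ and $\inProd{\stdInt}{c} = 1$, we have $\norm{c}_1 = \inProd{\stdInt}{c} = 1$, and for elements of $\stdCone$ a direct comparison of spectral norms gives $\norm{c}_2 \leq \norm{c}_1 = 1$. Orthogonal projections are nonexpansive, so $\norm{p} \leq \norm{c} \leq 1$ and hence $1/\norm{p}^2 \geq 1$, yielding (a).

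For part (b), I would first show $\alpha \in (0,1)$ strictly. Since $p \neq 0$ and $\inProd{z}{p} \leq 0$, we get $\inProd{p}{p-z} = \norm{p}^2 - \inProd{p}{z} > 0$, so $\alpha > 0$. Symmetrically $\alpha < 1$ follows from $\inProd{z}{z-p} = \norm{z}^2 - \inProd{z}{p} > 0$ since $z \neq 0$. Because $y \in \interior\stdCone$, $c \in \stdCone$, and both coefficients are strictly positive, a standard convexity argument (Theorem 6.1 of \cite{rockafellar}) shows $y' \in \interior\stdCone$. Linearity of the inner product gives $\inProd{\stdInt}{y'} = \alpha \cdot 1 + (1-\alpha) \cdot 1 = 1$. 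Finally, since $y' \in \stdCone$, we have $\norm{y'_i}_1 = \inProd{\stdInt_i}{y'_i}$ for every block, and these quantities sum to $\inProd{\stdInt}{y'} = 1$; by pigeonhole at least one block satisfies $\inProd{\stdInt_k}{y'_k} \geq 1/\ell$, so $\norm{y'}_{1,\infty} = \max_i \norm{y'_i}_1 \geq 1/\ell$.

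The only mildly delicate step is the chain $1/\norm{P_\stdMap y'}^2 \geq 1/\norm{z}^2 + 1/\norm{p}^2 \geq 1/\norm{z}^2 + 1$; the first inequality is essentially the classical ``von Neumann'' lemma underlying the perceptron analysis, and the second is precisely where the normalization $\inProd{\stdInt}{c} = 1$ and the cone inequality $\norm{c}_2 \leq \norm{c}_1$ are used. Everything else is linear algebra and bookkeeping.
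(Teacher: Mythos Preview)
Your argument is correct and follows essentially the same route as the paper's proof: both establish $\inProd{z}{p}\leq 0$ from self-adjointness of $P_\stdMap$, compute the closed form $\norm{P_\stdMap y'}^2 = (\norm{z}^2\norm{p}^2 - \inProd{z}{p}^2)/\norm{z-p}^2$, deduce $1/\norm{P_\stdMap y'}^2 \geq 1/\norm{z}^2 + 1/\norm{p}^2$, and finish with $\norm{p}\leq\norm{c}\leq\inProd{c}{\stdInt}=1$. The only cosmetic differences are that the paper bounds $\norm{P_\stdMap y'}^2 \leq \norm{z}^2\norm{p}^2/(\norm{z}^2+\norm{p}^2)$ first and then inverts (whereas you invert first and clear denominators), and the paper places the verification that $\alpha\in(0,1)$ inside part~(a) rather than~(b); your direct check that $\inProd{p}{p-z}>0$ and $\inProd{z}{z-p}>0$ is in fact slightly cleaner than the paper's decomposition of $1$ as a sum of two positive fractions.
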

\begin{proof}
	We first prove item $(a)$. 	By $(ii)$, we have
	$$
	\inProd{z}{p} = \inProd{P_\stdMap z}{c} = \inProd{z}{c} \leq 0. 
	$$
	This implies that  
	\[
	\norm{z-p}^2 = \norm{z}^2 + \norm{p}^2 - 2\inProd{z}{p} > 0,
	\]
	since both $z$ and $p$ are nonzero.
	We also have
	\[
	1 = \frac{\norm{z}^2 -\inProd{z}{p}}{\norm{z-p}^2}  + \frac{\norm{p}^2 - \inProd{z}{p}}{\norm{z-p}^2}. 
	\]
	We then conclude that the denominator of $\alpha$ is not zero and $\alpha \in (0,1)$.	 
	We then have 
$$
	{P_\stdMap { y'}} =p + \alpha (z - p).
$$
	Therefore, 
	\[
	 \norm{P_\stdMap {y'}}^2 = \alpha^2\norm{z - p}^2+2\alpha \inProd{p}{z- p} + \norm{p}^2.
	\]
	It follows that
	\[
	\norm{P_\stdMap {y'}}^2 = \norm{p}^2 - \frac{\inProd{p}{z - p}^2 -\inProd{z}{p}^2}{\|z-p\|^2}  = \frac{\|z\|^2\|p\|^2 - \inProd{z}{p}^2}{\|z\|^2+\|p\|^2-2\inProd{z}{p}} \leq \frac{\|z\|^2\|p\|^2}{\|z\|^2+\|p\|^2}.
	\]
	Since $P_\stdMap$ is a projection matrix, we have $\|p\|^2\leq\|c\|^2 \leq \inProd{c}{\stdInt}^2 \leq 1$.
	This implies that
	\[
	\frac1{\norm{P_\stdMap {y'}}^2}\geq \frac1{\|z\|^2} + \frac1{\|p\|^2} \geq \frac1{\|z\|^2} + 1.
	\]

Now we prove item $(b)$. Since we have shown that $\alpha \in (0,1)$, $y'$ is a strict convex combination of 
$c$ and the interior point $y$, so it is also an interior point. For the same reason, $y'$ also satisfies 
$\inProd{\stdInt}{y'} = 1$, since $\inProd{\stdInt}{c} = 1$ and $\inProd{\stdInt}{y} = 1$, due to items $(i)$ and $(ii)$. Finally, note that since $y' \in \stdCone$, we have 
$1 = \inProd{y'}{\stdInt} = \sum _{i=1}^\ell \norm{y'_i}_1 \leq \ell  \norm{y'}_{1,\infty} $, which 
shows that $\norm{y'}_{1,\infty} \geq \frac{1}{\ell}$.
\end{proof}
We remark that although the increase in $\frac1{\norm{P_\stdMap {y'}}^2}$ is  bounded below by $1$ regardless of 
$c$, the proof shows that the actual increase is a function of $-\inProd{z}{p} = -\inProd{z}{c}$. Therefore, if we wish 
to maximize the increase, we have to minimize $\inProd{z}{c}$ subject to $\inProd{c}{\stdInt} = 1$ and $c \in \stdCone$.
Similarly to the case of positive semidefinite matrices, the optimal value of this problem is $\lambda _{\min}(z)$, which is 
achieved when $c$ is the idempotent associated to the minimum eigenvalue of $z$, see, for instance, Equation (9) in \cite{Sturm2000}.

As we are assuming that $\stdCone = \stdCone _1 \times \ldots \times \stdCone _\ell$, the computation of the minimum eigenvalue of $z$ can be done by computing the minimum eigenvalue of each block and then taking the overall minimum. That is, 
$$
\lambda _{\min}(x) = \min \{  \lambda_{\min}(x_1), \ldots, \lambda _{\min} {(x_\ell)} \}.
$$
This is advantageous because if $\stdCone _i$ is either $\Re_+$ or $\SOC{\ell}$ the minimum eigenvalue and the corresponding idempotent can be computed exactly, see \eqref{eq:eig_soc} and \eqref{eq:eig_soc2}.

We can now state our version of the basic procedure  which encompasses the discussion so far, see Algorithm \ref{alg:basic}. We remind 
that superscripts in Algorithms \ref{alg:basic}, \ref{alg:main}, \ref{alg:basic:sdp} and \ref{alg:main:sdp} such as $y^i, \stdMap^i$ denote the iteration number, not exponentiation.We have the following complexity bounds. For the SOCP analogue, see the comments before Section 5 in \cite{KT16}.
{
\begin{algorithm}
	\caption{Basic Procedure }\label{alg:basic}
	\KwIn{$\stdMap$, $P_{\stdMap}$, $y^1 \in \interior \stdCone$ such that $\inProd{y^1}{\stdInt} = 1$}
	\KwOut{
		\begin{enumerate}
			\item $y$ such that $\norm{P_\stdMap  y} \leq \frac{1}{2r_{\max}\sqrt{\ell}}\norm{y}_{{1,\infty}}$, or
			\item a solution to \eqref{eq:feas_p}, or
			\item a solution to \eqref{eq:feas_d}.
		\end{enumerate}}
		
$ i \leftarrow 1$, $z \leftarrow P_\stdMap  y^1$. \label{alg:basic:proj}\\
\While{$z \neq 0$ and $z \not \in \interior \stdCone$ and $\norm{z} > \frac{1}{2r_{\max}\sqrt{\ell}}\norm{y^i}_{{1,\infty}}$\label{alg:bp:stop}}{
Let $c$ be an idempotent such that $\inProd{\stdInt}{c} = 1$ and $\inProd{z}{c} = \lambda _{\min}(z)$.\label{alg:basic:ide}\\
$p \leftarrow P_\stdMap c$.\\
\eIf{$p = 0$ or $p \in \interior \stdCone$ or $\norm{p} \leq \frac{1}{2r_{\max}\sqrt{\ell}}\norm{c}_{{1,\infty}} =  \frac{1}{2r_{\max}\sqrt{\ell}}$}
{\textbf{stop} and \textbf{return}  $c$.}
{$y^{i+1} \leftarrow \alpha y^i + (1-\alpha) c$, where $\alpha=\frac{\inProd{p}{p - z}}{\norm{z-p}^2}$. \\}
Set $i \leftarrow i+1$ and $z \leftarrow P_\stdMap  y^i$.
}
\textbf{return} $y^i$.
\end{algorithm}
}

\begin{proposition}\label{prop:bp_c}
Algorithm \ref{alg:basic} needs no more than $4\ell^3 r_{\max}^2$ steps before it halts.	
\end{proposition}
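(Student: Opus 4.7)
The plan is to iterate the geometric decay inequality of Proposition \ref{prop:bp}(a) and pair it with a uniform lower bound on $\norm{y^i}_{1,\infty}$, so that the while-loop stopping test is forced to hold once enough updates have been performed. The preparatory observation is that every iterate $y^i$ has unit trace. This is true for $y^1$ by assumption, and if $\inProd{y^i}{\stdInt} = 1$, the proof of Proposition \ref{prop:bp} already shows $\alpha \in (0,1)$, so the convex combination $y^{i+1} = \alpha y^i + (1-\alpha) c$ also satisfies $\inProd{y^{i+1}}{\stdInt} = \alpha + (1-\alpha) = 1$ since $\inProd{c}{\stdInt} = 1$ by the choice made in Line \ref{alg:basic:ide}. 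Because $y^i \in \stdCone$, this unit trace splits blockwise as $\sum_{k=1}^{\ell}\inProd{y^i_k}{\stdInt_k} = 1$, so at least one block contributes at least $1/\ell$, giving the invariant $\norm{y^i}_{1,\infty} \geq 1/\ell$ along the entire sequence.

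Next, I would iterate the inequality $1/\norm{P_\stdMap y^{i+1}}^2 \geq 1/\norm{P_\stdMap y^i}^2 + 1$ from Proposition \ref{prop:bp}(a); it applies on every iteration that does not trigger an early exit on $p$ or $z$. Combining with the starting bound $\norm{P_\stdMap y^1}^2 \leq \norm{y^1}_2^2 \leq \norm{y^1}_1^2 = 1$ (projections are nonexpansive, and for $y^1 \in \stdCone$ with unit trace the squared Euclidean norm is dominated by the squared trace), induction on $i \geq 1$ yields
\[
\frac{1}{\norm{P_\stdMap y^i}^2} \geq i, \qquad \text{hence} \qquad \norm{P_\stdMap y^i} \leq \frac{1}{\sqrt{i}}.
\]

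Finally, I would put both estimates together. The while-loop condition requires $\norm{P_\stdMap y^i} > \frac{1}{2 r_{\max}\sqrt{\ell}} \norm{y^i}_{1,\infty}$, and the blockwise trace bound gives $\frac{1}{2 r_{\max}\sqrt{\ell}} \norm{y^i}_{1,\infty} \geq \frac{1}{2 r_{\max} \ell^{3/2}}$. So as soon as $i \geq 4 \ell^3 r_{\max}^2$, the estimate $\norm{P_\stdMap y^i} \leq 1/\sqrt{i}$ reaches $\frac{1}{2 r_{\max}\ell^{3/2}}$ and the while-loop condition must fail; the algorithm then halts, either by returning $y^i$ or through one of the earlier exits if they fired sooner. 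I do not expect a real obstacle here; the main care points are the trace-one invariant (pure bookkeeping once $\alpha \in (0,1)$ is in hand) and making sure that the factor $1/\ell$ coming from $\norm{y^i}_{1,\infty} \geq 1/\ell$ compounds correctly with the $\sqrt{\ell}$ already in the threshold to produce the cubic $\ell^3$ in the final bound.
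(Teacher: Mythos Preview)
Your proposal is correct and follows essentially the same approach as the paper's own proof: invoke Proposition~\ref{prop:bp} to get both the invariant $\norm{y^i}_{1,\infty}\ge 1/\ell$ and the increment $1/\norm{P_\stdMap y^{i+1}}^2 \ge 1/\norm{P_\stdMap y^i}^2 + 1$, then combine to force the stopping test once $i\ge 4\ell^3 r_{\max}^2$. Your write-up is in fact slightly more careful than the paper's, since you explicitly justify the starting bound $\norm{P_\stdMap y^1}\le 1$ via $\norm{P_\stdMap y^1}_2\le \norm{y^1}_2\le \norm{y^1}_1=1$, which the paper leaves implicit.
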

\begin{proof}
Due to Proposition \ref{prop:bp}, all the iterates $y^i$ satisfy $\norm{y^i}_{1,\infty} \geq 1/\ell$. 
Therefore, if $\norm{P_\stdMap y^i} \leq \frac{1}{2r_{\max}\ell^{3/2}}$, we will meet the stopping criteria 
$\norm{P_\stdMap y^i} \leq \frac{\norm{y^i}_{1,\infty}}{2r_{\max}\sqrt{\ell}}$. Also due to Proposition \ref{prop:bp}, $\frac{1}{\norm{P_\stdMap  y^i}^2}$ improves by at least one 
after each iteration. Therefore, after $4\ell^3 r_{\max}^2$  iterations, Algorithm \ref{alg:basic} will halt for sure. 
\end{proof}

Any algorithm aimed at minimizing $\norm{P_\stdMap  y} $ can serve as a ``basic procedure'', as long as there is some bound 
that ensures that we will hit one of the stopping criteria in a finite amount of time. In Section 5 of \cite{PS16}, the authors describe 
four different algorithms that can serve as basic procedure. 
The algorithm described here is analogous to the ``Von Neumann scheme'' which, as the authors remark, is essentially what 
was used by Chubanov in \cite{Ch15}. Nevertheless among the four, the  ``smooth perceptron'' \cite{SP12} has the better complexity bound for the number of 
iterations, although each iteration is more expensive.  

In particular, the smooth perceptron is such that if it has not finished 
at the $i$-th iteration then, $\norm{Py^i}^2 \leq \frac{8}{(t+1)^2}$. So if we use  the stopping criteria that $y$ should satisfy 
$\norm{P_\stdMap  y} \leq \frac{1}{2r_{\max}\sqrt{\ell}}\norm{y}_{{1,\infty}}$, as in the proof of Proposition \ref{alg:basic}, it is 
enough to force $\norm{P_\stdMap  y}$ to be smaller than $\frac{1}{2r_{\max}\ell^{3/2}} $, which will happen in no more than 
$4\sqrt{2}r_{\max} \ell ^{3/2} -1 $ iterations.

We will now estimate the cost per iteration of Algorithm \ref{alg:basic}. 
Recall that the dimension of $\jAlg$ is $d$, so we can identify $\jAlg$ with 
some $\Re^d$.  Then, we can assume that $\stdMap$ is, in fact, a $m\times d$ matrix.
For the sake of estimating the computational cost, we will assume that $\stdMap$ is 
surjective, but we emphasize that the theoretical results proved in this work do not need this assumption. 
Note that  computations performed in both the Basic Procedure (Algorithm \ref{alg:basic}) and the Main Procedure (Algorithm \ref{alg:main}) only depend on the kernel of $\stdMap$.

Assuming that $\stdMap$ is surjective, we may compute $P_{\stdMap}$ naively through 
the expression $I - \stdMap^*(\stdMap\stdMap^*)^{-1}\stdMap$, where $I$ is the identity operator on $\jAlg$. 
Following a similar suggestion in Section 6 of \cite{PS16}, 
we compute the Cholesky decomposition of $(\stdMap\stdMap^*)^{-1}$ so that $(\stdMap\stdMap^*)^{-1} = LL^*$, where $L$ is a $m \times m$ matrix. Then, we store the $m\times d$ matrix $L^*\stdMap$. 
Note that computing 
$\stdMap\stdMap^*$ and decomposing its inverse can be done in 
time proportional to $\mathcal{O}(m^3) + \mathcal{O}(m^2d)$.

 As $P_{\stdMap}$ stays constant throughout the algorithm, we only need to 
compute it once and then we just have to worry about computing $P_{\stdMap}y^i$ and $ P_{\stdMap}c $ at each iteration,  which costs $\mathcal{O}(md)$ if we make efficient use of the matrix $L^*\stdMap$. 
In our case, $P_{\stdMap}$ is computed inside Algorithm 
\ref{alg:main} and Algorithm \ref{alg:basic} receives it as input.

We now list other costs that need to be taken into account: the cost $c_{\interior \stdCone}$ of deciding 
if $z \in \interior \stdCone$, the cost $c_{\text{min}}$ of computing  the minimum eigenvalue $\lambda _{\min}(x)$ with the corresponding idempotent and the cost $c_{\text{norm}}$ of computing the norm
of $\norm{x}_{1,\infty}$ for elements $x \in \stdCone$. 

The cost $\norm{x}_{1,\infty}$ is proportional to $\mathcal{O}(d)$ since for $x \in \stdCone$, 
we have $\norm{x}_{1,\infty} = \max\, \{\inProd{x_1}{\stdInt_1},\ldots,\inProd{x_{\ell}}{\stdInt _{\ell} } \}$.
The cost $c_{\interior \stdCone}$ is majorized by $c_{\text{min}}$, since we can decide 
whether $x$ lies in interior of $\stdCone$ by checking if $\lambda _{\min}(x) > 0$. 
So the complexity of the basic procedure is proportional to
\begin{equation}\label{eq:bp:cost}
\mathcal{O}(\ell^3 r_{\max}^2(\max(md,c_{\text{min}}))).
\end{equation}

To conclude this section, we note that the condition ``$z \neq 0$'' in Line \ref{alg:bp:stop} of Algorithm \ref{alg:basic} can, optionally, be substituted by ``$y^i -z\not \in \stdCone$''. 
The motivation is that if $y^i -z \in \stdCone$, then there are two possibilities.
The first is $y^i - z \neq 0$ and $y^i - z$ is a solution to \eqref{eq:feas_d}, since $y^i - z $ belongs to the range of $\stdMap^*$.
The second is $y^i - z = 0$ and $z$ is a solution to \eqref{eq:feas_p}, since $y^i$ belongs to $\interior \stdCone$ throughout the algorithm.
Either case, we can stop the algorithm.
 
Using the condition ``$y^i -z\not \in \stdCone$'' does not affect the analysis conducted in this section because $y^i -z\not \in \stdCone$ implies $z \neq 0$, since $y^i \in \interior \stdCone$. Therefore, 
Propositions \ref{prop:bp} and \ref{prop:bp_c}  still apply.
The advantage is that since it is harder to satisfy than $z \neq 0$, it may lead to less iterations.
The  disadvantage is that, in order to verify that $y^i -z\not \in \stdCone$, it is necessary to do an extra minimum eigenvalue computation per iteration. 
This, however, does not alter the order complexity.

\section{Full-Procedure}\label{sec:full}
In this section, we summarize everything we have done so far and present an algorithm 
for the feasibility problem \eqref{eq:feas_p}, see Algorithm \ref{alg:main}. In essence, we call the basic procedure (Algorithm \ref{alg:basic}) and if we fail to find a solution to either \eqref{eq:feas_p} or \eqref{eq:feas_d},
we will receive some vector $y$. 
Following the discussion in Section \ref{sec:vol} and in Theorem \ref{theo:vol2}, for at least one index $k$, we are able to construct a half-space $H(w_k,v_k)$ such that the volume $H(w_k,v_k) \cap \stdCone _k$ is less than $H(\stdInt_k,\stdInt_k/r_k)\cap \stdCone_k$ and their ratio is bounded above by a constant, namely, 
${\exp({-\varphi(\rho_k)})}^{d_k/r_k} \leq \left( 0.918\right)^{d_k/r_k}$. Then, we 
construct an automorphism of $\stdCone$ that maps the region $H(\stdInt _k,\stdInt _k/r_k) \cap \stdCone _k$ to $H(w_k, v_k)\cap \stdCone_k$. Finally, we substitute $\stdMap$ by 
$\stdMap Q$ and repeat.

In  Algorithm \ref{alg:main}, we keep track of the volume reduction along the blocks through 
the $\epsilon _k$ variables. 
Lemma \ref{lemma:stop} tells us that the $\epsilon _k$ are upper bounds for 
the minimum eigenvalue of the feasible solutions to \eqref{eq:feas_p_s}.
When the accumulated reduction is sufficiently small, we have a certificate 
that \eqref{eq:feas_p_s} does not have $\epsilon$-feasible solutions. 

\begin{algorithm}[H]
	\caption{Main Algorithm}\label{alg:main}
	\KwIn{$\stdMap$, $\stdCone$, $\epsilon$}
	\KwOut{\begin{enumerate}
			\item a solution to \eqref{eq:feas_p}, or
			\item a solution to \eqref{eq:feas_d}, or
			\item ``there is no $\epsilon$-feasible solution''.
		\end{enumerate}}
$\stdMap ^1 \leftarrow \stdMap$, $i \leftarrow 0$ and 
$\epsilon_j \leftarrow 0$ for all $j \in \{1,\ldots, \ell\}$. \label{alg:main:first}\\
Compute $P_{\stdMap}$ and call the basic procedure (Algorithm \ref{alg:basic}) with $\stdMap^i,P_{\stdMap}, \frac{\stdInt}{r}$ and 
denote its output by $y$. \label{alg:main:call_bp}\\
\If{$y$ is feasible for \eqref{eq:feas_d} or \eqref{eq:feas_p}}{\textbf{stop} and \textbf{return} $y$.\label{alg:main:stop1}}
\For{$k\in \{1,\ldots, \ell \}$}{
	$\rho _k \leftarrow \frac{\norm{y_k}_1}{r_k \norm{P_\stdMap  y}\sqrt{\ell}}$ \label{alg:main:rho}\\
	 \eIf{$\rho _k > 1$ \label{alg:main:if}}
	{
		$\beta \leftarrow r_k- \left(\frac{1}{\rho_k} - \frac{1}{\sqrt{\rho_k(3\rho_k - 2)}}\right)$\\
		$w_k \leftarrow  \left(\frac{r_k-\beta}{\inProd{y_k}{\stdInt_k}} \right) \rho _k r_k y_k + \beta \stdInt _k$\label{alg:main:wk}\\
		$Q_k \leftarrow Q_{w^{-1/2}_k\sqrt{r_k}}$ \label{alg:main:qwk} \\
		$\epsilon _k \leftarrow \epsilon_k + \log r_k -  \frac{1}{r_k}\log {\det w_k} $\\
		\If{$\epsilon _k < \log r_k + \log \epsilon$ \label{alg:main:stop}}{\textbf{stop}, there is no $\epsilon$-feasible solution. }
		}
	{
		$Q_k \leftarrow I_k$
		}	
}
Let $Q^i$ be such that for all $x \in \jAlg$ we have $
Q^i(x)  = \left(Q_1(x_1),\ldots,Q_\ell (x_\ell)\right). 
$\\
$\stdMap^{i+1} \leftarrow \stdMap^i Q^i,$ $i\leftarrow i+1$. Go to Line \ref{alg:main:call_bp}. \label{alg:main:update_a}
\end{algorithm}
Algorithm \ref{theo:main} and the one in \cite{KT16} have a few differences.
First of all, for the same $y$, we attempt to find volume reductions along all blocks, where 
in \cite{KT16}, volume reductions only happen for exactly one block per iteration.
Furthermore, here we adapt a greedy approach and following Theorems \ref{theo:step} and \ref{theo:vol2}, 
we try to reduce the volume as much as possible by selecting $\rho _k$ to be as large as our analysis permits.

We will now prove a few lemmas regarding the correctness of Algorithm \ref{alg:main}.

\begin{lemma}\label{lemma:feas1}
For every iteration $i$, every block $\jAlg _k$ and for all $x \in \feasPs$ we have 
$$
x_k \in Q^1_k\times \cdots \times Q^i_k H(\stdInt_k,\stdInt_k/r_k).
$$
\end{lemma}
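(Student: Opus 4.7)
The plan is to proceed by induction on $i$, with the key auxiliary idea being to pull each feasible $x \in \feasPs$ back through the composition of previous quadratic maps and check feasibility with respect to the current rescaled system.

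\medskip

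\noindent\textbf{Base case ($i = 0$, or equivalently checking the ``initial'' containment used at $i=1$).} If $x \in \feasPs$, then $\norm{x}_{1,\infty} \leq 1$ and $x_k \in \stdCone_k$, so $\inProd{x_k}{\stdInt_k} = \norm{x_k}_1 \leq 1 = \inProd{\stdInt_k}{\stdInt_k/r_k}$. Hence $x_k \in H(\stdInt_k,\stdInt_k/r_k)$, which is the required statement when the composition is empty.

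\medskip

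\noindent\textbf{Inductive step.} Assume the claim holds at iteration $i$. Define $\tilde x$ blockwise by $\tilde x_k = (Q^1_k \circ \cdots \circ Q^i_k)^{-1}(x_k)$. I would verify that $\tilde x$ is feasible for the scaled problem with $\stdMap^{i+1}$, namely $\mathcal{F}^{\stdMap^{i+1}}_{\text{scaled}}$. Three points:
\begin{enumerate}[label=(\alph*)]
\item $\stdMap^{i+1}\tilde x = \stdMap Q^1 Q^2 \cdots Q^i \tilde x = \stdMap x = 0$ by the update rule in Line \ref{alg:main:update_a}.
\item $\tilde x \in \interior \stdCone$, since each $Q^j_k$ is either the identity or of the form $Q_{w_k^{-1/2}\sqrt{r_k}}$ with $w_k \in \interior \stdCone_k$, hence an automorphism of $\interior \stdCone_k$ by Proposition \ref{prop:quad}.
\item $\norm{\tilde x}_{1,\infty} \leq 1$, since the inductive hypothesis gives $\tilde x_k \in H(\stdInt_k,\stdInt_k/r_k)$, i.e., $\inProd{\tilde x_k}{\stdInt_k} = \norm{\tilde x_k}_1 \leq 1$ for every $k$.
\end{enumerate}
Now at iteration $i+1$ the basic procedure is called with $\stdMap^{i+1}$; if it does not stop with a feasibility certificate (otherwise Line \ref{alg:main:stop1} terminates the algorithm and there is nothing to prove), it returns some $y$ satisfying the projection bound with respect to $P_{\stdMap^{i+1}}$. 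Applying Theorem \ref{theo:vol2} with $\stdMap$ replaced by $\stdMap^{i+1}$ to the feasible point $\tilde x$, I get $\tilde x_k \in H(w_k,v_k) = Q^{i+1}_k(H(\stdInt_k,\stdInt_k/r_k))$ whenever $\rho_k > 1$. For blocks with $\rho_k \leq 1$ the algorithm sets $Q^{i+1}_k = I_k$, and the inclusion $\tilde x_k \in H(\stdInt_k,\stdInt_k/r_k)$ still holds from point (c) above. In either case, $\tilde x_k \in Q^{i+1}_k(H(\stdInt_k,\stdInt_k/r_k))$. Applying $Q^1_k \circ \cdots \circ Q^i_k$ to both sides and using $x_k = (Q^1_k \circ \cdots \circ Q^i_k)(\tilde x_k)$ yields the desired containment at iteration $i+1$.

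\medskip

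\noindent\textbf{Where I expect the real subtlety to lie.} The argument is conceptually clean but the bookkeeping between the \emph{original} feasible set $\feasPs$ (against which the lemma is stated) and the \emph{rescaled} feasible set $\mathcal{F}^{\stdMap^{i+1}}_{\text{scaled}}$ (on which Theorem \ref{theo:vol2} operates) must be kept straight. The critical observation making the induction close is that the inductive hypothesis is exactly what is needed to verify $\norm{\tilde x}_{1,\infty} \leq 1$, which is the nontrivial constraint distinguishing these two feasible sets. Also, one must be careful to handle uniformly the blocks with $\rho_k \leq 1$ (no rescaling) alongside those with $\rho_k > 1$ (genuine volume reduction), so that the conclusion holds for \emph{every} block, not merely the ones being reduced at this iteration.
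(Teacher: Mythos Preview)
Your proposal is correct and follows essentially the same approach as the paper: induction on the iteration index, pulling $x$ back through the composed automorphisms to obtain a point feasible for the rescaled problem $(\mathrm{P}_{\text{scaled}}^{\stdMap^{i+1}})$, then invoking Theorem \ref{theo:vol2} blockwise and handling the $\rho_k \leq 1$ case separately. The only cosmetic difference is that you anchor the induction at $i=0$ (empty composition) whereas the paper anchors it at $i=1$, but the two are entirely equivalent.
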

\begin{proof}
We will proceed by induction, so first we prove the result for $i = 1$. According to Line \ref{alg:main:if}  of Algorithm 
\ref{alg:main}, we have that $Q^1_k$ is either the identity map or 
$Q^1 _k = Q_{w_k^{-1/2}\sqrt{r_k}}$, where $w_k$ is selected as in 
Line \ref{alg:main:wk}. If it is the former, it is clear that 
$x_k \in Q^1_k H(\stdInt_k,\stdInt_k/r_k)$, since $\inProd{x_k}{\stdInt _k} \leq \norm{x}_{1,\infty} \leq 1$. If it is the latter, 
we must have $\rho_k > 1$, in which case,  Theorem \ref{theo:vol2} tells
us that $x_k \in H(w_k,w_k^{-1})$ and that $Q^1 _k H(\stdInt_k,\stdInt_k/r_k) = H(w_k,w_k^{-1}) $.

Now suppose the result is true for some $i \geq 1$, we shall prove it also holds for 
$i+1$.  We first let 
$$
u = (Q^{i})^{-1} \times \ldots \times  (Q^{1})^{-1} x.
$$
Since all the $Q^i$ are automorphisms of $\stdCone$, we have $u \in \interior \stdCone$.
Now, we argue that $\norm{u}_{1,\infty} \leq 1$. Since every $Q^i$ is constructed in a block-wise manner, we have 
$$
u_k = (Q^{i}_k)^{-1} \times \ldots \times  (Q^{1}_k)^{-1} x_k,
$$
which, by the induction hypothesis, implies that $u_k \in H(\stdInt_k,\stdInt_k/r_k)$ for every $k$. Therefore, $\inProd{u_k}{\stdInt _k} \leq 1$ for every $k$, which implies $\norm{u}_{1,\infty} \leq 1$.
Then, following Line \ref{alg:main:update_a}, $u$ is a feasible solution to the problem of finding $\hat x$ such 
that $\stdMap ^{i+1}\hat x = 0$, $\norm{\hat x}_{1,\infty} \leq 1$ and $\hat x \in \interior \stdCone$, where
$$
\stdMap ^{i+1} = \stdMap Q^{1} \times \cdots \times Q^i .
$$
As before, $Q^{i+1}_k$ is either the identity map or $Q_{w_k^{-1/2}\sqrt{r_k}}$. If it is the 
former, we are done. If it is the latter, it is because $w_k$ was constructed 
by applying Theorem \ref{theo:vol2} to $\stdMap ^{i+1},\stdCone$ and $y$, where $y$ is 
obtained at $(i+1)$-th iteration. We conclude  that 
$u _k \in H(w_k,w_k^{-1}) =  Q^{i+1}_kH(\stdInt_k, \stdInt_k/r_k)$. It follows that 
$(Q^{i+1}_k)^{-1}   u_k  \in H(\stdInt_k, \stdInt_k/r_k)$. This is equivalent to 
$x_k \in Q^{1}_k\times \cdots \times Q^{i+1}_k H(\stdInt_k,\stdInt_k/r_k).$
\end{proof}

The next lemma justifies the stopping criteria in Line \ref{alg:main:stop}.
\begin{lemma}\label{lemma:stop}
For all $x \in \feasPs$, for every iteration and for all $k \in \{1,\ldots, \ell\}$
$$\epsilon _k \geq \log r_k + \log \lambda _{\min}(x_k)
$$
holds. In particular, if $\epsilon _k <  \log r_k + \log \epsilon$,
then $\feasPs$ does not have an $\epsilon$-feasible solution.

\end{lemma}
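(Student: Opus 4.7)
The plan is to unfold the accumulated definition of $\epsilon_k$ along the iterations, connect it to $\det x_k$ via the multiplicative behavior of the quadratic representation, and then extract a bound on $\lambda_{\min}(x_k)$ via the arithmetic/geometric inequality for the eigenvalues.

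First, fix a block index $k$ and an iteration $i$. By Lemma \ref{lemma:feas1}, any $x \in \feasPs$ satisfies $x_k \in Q^1_k\cdots Q^i_k\,H(\stdInt_k,\stdInt_k/r_k)$. Since every $Q^j_k$ is either the identity or a quadratic representation $Q_{a_j}$ with $a_j \in \interior \stdCone_k$, each is an automorphism of $\stdCone_k$ by Proposition \ref{prop:quad}. Hence the preimage $u_k := (Q^i_k)^{-1}\cdots (Q^1_k)^{-1} x_k$ lies in $H(\stdInt_k,\stdInt_k/r_k)\cap \stdCone_k$, so $\tr u_k = \inProd{u_k}{\stdInt_k} \leq 1$. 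Because the eigenvalues of $u_k$ are nonnegative, AM--GM gives $\det u_k \leq (\tr u_k/r_k)^{r_k}\leq r_k^{-r_k}$, i.e.\ $\log\det u_k \leq -r_k \log r_k$.

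Next, I would track $\det x_k$. Only those iterations $j$ with $\rho_k^{(j)} > 1$ contribute a nontrivial $Q^j_k = Q_{a_j}$ where $a_j = (w_k^{(j)})^{-1/2}\sqrt{r_k}$; these are exactly the iterations in which $\epsilon_k$ is updated. Using the standard identity $\det Q_a(y) = (\det a)^2 \det y$ for simple Euclidean Jordan algebras, and the fact that $\det a_j = r_k^{r_k/2}(\det w_k^{(j)})^{-1/2}$, one gets
\[
\log \det x_k \;=\; \sum_{j}\bigl[r_k \log r_k - \log\det w_k^{(j)}\bigr] + \log\det u_k \;=\; r_k\,\epsilon_k + \log \det u_k,
\]
where the sum is over the iterations at which $Q^j_k \neq I_k$, and the last equality is just the definition of $\epsilon_k$ after iteration $i$. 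Combining with the bound on $\log\det u_k$ gives $\log\det x_k \leq r_k\epsilon_k - r_k\log r_k$.

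Finally, since $\lambda_{\min}(x_k) \leq \lambda_j(x_k)$ for all $j$ and all eigenvalues of $x_k$ are nonnegative, $\lambda_{\min}(x_k)^{r_k} \leq \det x_k$, so
\[
r_k \log \lambda_{\min}(x_k) \leq \log\det x_k \leq r_k\epsilon_k - r_k \log r_k,
\]
which is the claimed inequality. The last sentence of the lemma is then immediate: if there were an $\epsilon$-feasible $x \in \feasPs$, then $\lambda_{\min}(x_k) \geq \lambda_{\min}(x) \geq \epsilon$ would force $\epsilon_k \geq \log r_k + \log \epsilon$, contradicting the stopping criterion. The one step that needs a little care is checking the multiplicative formula $\det Q_a(y) = (\det a)^2 \det y$ on a per-block basis, but this is a classical identity for simple Euclidean Jordan algebras and aligns with the determinant relation in item \textit{(iii)} of Proposition \ref{prop:quad}.
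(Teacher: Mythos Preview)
Your argument is correct and takes a genuinely different route from the paper. The paper stays with the volumetric viewpoint: it observes that $Q^1_k\cdots Q^i_k\,H(\stdInt_k,\stdInt_k/r_k)$ is again a half-space, computes its cap volume $V_k$ as $\Vol(\stdInt_k,\stdInt_k/r_k)\prod_j \det Q^j_k$, and then bounds $V_k$ from below using Proposition~\ref{prop:min_vol}, which says that among all half-spaces containing a given interior point $x_k$ the smallest cap volume is $(r_k\sqrt[r_k]{\det x_k})^{d_k}\Vol(\stdInt_k,\stdInt_k/r_k)$. Taking logarithms yields the same inequality you obtain. Your approach bypasses volumes entirely: you pull $x_k$ back to $u_k$, use AM--GM to bound $\det u_k$ via $\tr u_k\le 1$, and push the Jordan determinant forward through the identity $\det Q_a(y)=(\det a)^2\det y$. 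This is more elementary and self-contained, while the paper's argument keeps the narrative tied to the volume reductions that drive the rest of the analysis. One small remark: item~\textit{(iii)} of Proposition~\ref{prop:quad} concerns the operator determinant of $Q_x$ as a linear map on $\jAlg$, not the Jordan-algebra determinant of $Q_x(y)$; the identity you actually need, $\det Q_a(y)=(\det a)^2\det y$, is a separate (and standard) fact in simple Euclidean Jordan algebras, so the ``alignment'' you mention is indirect.
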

\begin{proof}
We first handle the trivial case.
At Line \ref{alg:main:first}, $\epsilon _k$ is set to zero. Note that if $x \in \feasPs$, 
then $\inProd{x_k}{\stdInt _k} \leq 1$, from which it follows that $\lambda _{\min}(x_k) \leq 1/r_k$, 
since $\inProd{x_k}{\stdInt _k}$ is the sum of eigenvalues of $x_k$.
Therefore, $\log r_k + \log\lambda _{\min}(x_k)  \leq 0$.	
	
Lemma \ref{lemma:feas1} tells us that at the $i$-th iteration we have 
$x_k \in  Q_k H(\stdInt_k,\stdInt_k/r_k)$ for all $x \in \feasPs$, where $Q_k =  Q^1_k\times \cdots \times Q^i_k$.
Note that  $Q_k H(\stdInt_k,\stdInt_k/r_k)$ is also a half-space. 
Denote the volume of $(Q_k (H(\stdInt_k,\stdInt_k/r_k))) \cap \stdCone _k$ by $V_k$. 
Then, from Proposition \ref{prop:min_vol}, we have 
$V_k \geq r_k^{d_k} (\sqrt[r_k]{\det x_k})^{d_k} \Vol(\stdInt_k,\stdInt_k/r_k)$, for all 
$x \in \feasPs$. As $\det x_k$ is the product of eigenvalues of $x_k$, we get 
\begin{equation} \label{eq:log_bound}
V_k \geq  r_k^{d_k} \lambda _{\min}(x_k)^{d_k} \Vol(\stdInt_k,\stdInt_k/r_k), 
\end{equation}
for all $x \in \feasPs$. Furthermore, we have $V_k = \Vol(\stdInt_k,\stdInt_k/r_k) \prod _{j=1}^i{\det Q^j_k}$ and $\det Q^j_k$ is either $1$ or
$$ \left(\frac{r_k}{{\sqrt[r_k]{\det w_k}}} \right)^{d_k}
$$ depending on whether at the $j$-th iteration we 
had $\rho _k \leq 1$ or $\rho _k > 1$, respectively. Taking the logarithm at both
sides of \eqref{eq:log_bound}, we conclude that 
$$
\epsilon _k \geq \log r_k + \log \lambda _{\min}(x_k).
$$
Therefore, if $\epsilon _k < \log r_k + \log \epsilon $, we 
get $\epsilon > \lambda _{\min}(x_k) \geq \lambda _{\min}(x)$, for all 
$x \in \feasPs$. This implies the absence of $\epsilon$-feasible solutions.
\end{proof}	

For what follows, we discard the (trivial) case where $\epsilon$ is large.  Note that if $\epsilon \geq \frac{1}{r_k}$ for some $k$, then for $x \in \feasPs$
we have $\lambda _{\min}(x) \leq \lambda _{\min}(x_k) \leq \epsilon$, since the sum of the eigenvalues of $x_k$ is 
less or equal than $1/r_k$. In this case, there is no work to be done as this shows that $\feasPs$ has no $\epsilon$-feasible solution. So in the next result, we suppose 
that $\epsilon < \frac{1}{r_k}$ holds for every $k$.
\begin{theorem}\label{theo:main}
Algorithm \ref{alg:main} stops  after no more than 
$$
 \frac{r}{\varphi(2)}\log \left(\frac{1}{\epsilon}\right) -  \sum _{k= 1} ^\ell\frac{r_k \log(r_k)}{\varphi(2)}
$$ iterations, where $\varphi(\rho)$ is as in Theorem \ref{theo:vol2}. In particular, $ \frac{r}{\varphi(2)}\log \left(\frac{1}{\epsilon}\right)$ iterations 
are enough.
\end{theorem}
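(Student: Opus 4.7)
The plan is to combine the per-iteration guarantees that were built up in Section~\ref{sec:vol} (in particular Lemma~\ref{lemma:y_proj2} and Theorem~\ref{theo:vol2}) with a simple book-keeping of the quantities $\epsilon_k$ maintained by Algorithm~\ref{alg:main}.

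First, observe that if the algorithm stops at Line~\ref{alg:main:stop1} then the claimed bound is trivial, so we may assume that every call to Algorithm~\ref{alg:basic} returns a vector $y$ with $\norm{P_\stdMap y}\le \frac{1}{2r_{\max}\sqrt{\ell}}\norm{y}_{1,\infty}$ and $P_\stdMap y\neq 0$ (otherwise $y\in\matRange \stdMap^{*}\cap \stdCone$ would be a solution to \eqref{eq:feas_d} and we would have exited at Line~\ref{alg:main:stop1}). Consequently Lemma~\ref{lemma:y_proj2}(i) applies: at each iteration at least one index $k^{*}$ (any block achieving $\norm{y_{k^{*}}}_{1}=\norm{y}_{1,\infty}$) satisfies $\rho_{k^{*}}\ge 2$, and the corresponding branch $\rho_{k}>1$ of the for-loop is executed for that block.

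Next, I will quantify how much $\epsilon_{k}$ drops per iteration. When $\rho_{k}>1$ the update in Algorithm~\ref{alg:main} is $\epsilon_{k}\leftarrow \epsilon_{k}+\log r_{k}-\tfrac{1}{r_{k}}\log\det w_{k}$. Theorem~\ref{theo:vol2}(iii) gives $\tfrac{r_{k}}{\sqrt[r_{k}]{\det w_{k}}}\le \exp(-\varphi(\rho_{k})/r_{k})$; taking logarithms, the increment to $\epsilon_{k}$ is at most $-\varphi(\rho_{k})/r_{k}$. A quick computation of $\varphi'(\rho)=\rho^{-2}\bigl(1-(3-2/\rho)^{-1/2}\bigr)$ shows $\varphi$ is strictly increasing for $\rho>1$, so whenever $\rho_{k}\ge 2$ the value of $\epsilon_{k}$ decreases by at least $\varphi(2)/r_{k}$. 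When $\rho_{k}\le 1$, $\epsilon_{k}$ is unchanged, and when $1<\rho_{k}<2$ it decreases (by a smaller amount which we will not need).

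Now I set up the counting argument. For each block $k$, let $N_{k}$ denote the number of iterations carried out so far at which $\rho_{k}\ge 2$. Since at every iteration some block has $\rho_{k}\ge 2$, the total number $T$ of iterations satisfies $T\le \sum_{k=1}^{\ell}N_{k}$. Using that $\epsilon_{k}$ starts at $0$ and drops by at least $\varphi(2)/r_{k}$ on each of the $N_{k}$ iterations counted, the current value $\epsilon_{k}$ is bounded above by $-N_{k}\varphi(2)/r_{k}$. If the algorithm has not yet stopped at Line~\ref{alg:main:stop}, then $\epsilon_{k}\ge \log r_{k}+\log\epsilon$ for every $k$, which yields
\begin{equation*}
N_{k}\;\le\; \frac{r_{k}\bigl(\log(1/\epsilon)-\log r_{k}\bigr)}{\varphi(2)}.
\end{equation*}
(The assumption $\epsilon<1/r_{k}$ stated just before the theorem makes the right-hand side nonnegative.) Summing over $k$ and using $r=r_{1}+\cdots+r_{\ell}$ gives
\begin{equation*}
T\;\le\;\sum_{k=1}^{\ell}N_{k}\;\le\;\frac{r}{\varphi(2)}\log\!\left(\frac{1}{\epsilon}\right)-\sum_{k=1}^{\ell}\frac{r_{k}\log r_{k}}{\varphi(2)},
\end{equation*}
which is exactly the claimed bound; dropping the nonpositive correction $-\sum_{k}r_{k}\log r_{k}/\varphi(2)$ (nonpositive because $r_{k}\ge 1$) yields the simpler bound $\tfrac{r}{\varphi(2)}\log(1/\epsilon)$. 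There is no real obstacle; the only thing worth checking carefully is that $\varphi$ is monotone so that $\rho_{k}\ge 2$ really does guarantee a uniform $\varphi(2)/r_{k}$ decrease, and that at every iteration at least one block supplies such a decrease.
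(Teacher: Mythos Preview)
Your proof is correct and follows essentially the same approach as the paper: both argue that each iteration is ``good'' for at least one block (via Lemma~\ref{lemma:y_proj2}(i)), that a good iteration for block $k$ decreases $\epsilon_k$ by at least $\varphi(2)/r_k$ (via Theorem~\ref{theo:vol2}(iii) and monotonicity of $\varphi$), and then bound the number of good iterations per block using the stopping criterion before summing. Your write-up is in fact a bit more explicit than the paper's (you spell out the monotonicity of $\varphi$ and the inequality $T\le\sum_k N_k$), but there is no substantive difference.
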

\begin{proof}
At each iteration, after Algorithm \ref{alg:basic} is called we obtain some point $y$. If 
$y$ is neither feasible for \eqref{eq:feas_p} nor \eqref{eq:feas_d}, then it satisfies 
$\norm{P_\stdMap  y} \leq \frac{1}{2r_{\max}\sqrt{\ell}}\norm{y}_{{1,\infty}}$.

Due to item $(ii)$ of Lemma \ref{lemma:y_proj2}, there is at least one 
index $k$ for which $\rho _k \geq 2$. Then from Theorem \ref{theo:vol2}, we have that
$$
\log r_k -  \frac{1}{r_k}\log {\det w_k} \leq -\frac{\varphi(2)}{r_k} < 0.
$$
This means that $\epsilon _k$ decrease by at least $\frac{\varphi(2)}{r_k}$.
Let us say that ``an iteration is good for $\stdCone_k$'' if at that iteration we 
have $\rho_k \geq 2$. From Lemma 
\ref{lemma:stop}, it follows that we need no more than
$$
\frac{r_k}{\varphi(2)}\log \left(\frac{1}{\epsilon r_{k}}\right).
$$
``good iterations for $\stdCone_k$'' before the stopping criteria in Line \ref{alg:main:stop} is 
satisfied. In particular, 
after
$$
\sum _{k= 1} ^\ell \frac{r_k}{\varphi(2)}\log \left(\frac{1}{\epsilon}\right) - \frac{r_k \log(r_k)}{\varphi(2)} =  \frac{r}{\varphi(2)}\log \left(\frac{1}{\epsilon}\right) -  \sum _{k= 1} ^\ell\frac{r_k \log(r_k)}{\varphi(2)} .
$$
iterations, we will meet the minimum number of good iterations for at least one cone.
We can then discard the  negative terms and obtain the bound $\frac{r}{\varphi(2)}\log \left(\frac{1}{\epsilon}\right)$
%
\end{proof}

We now take a look at the cost per iteration of Algorithm \ref{alg:main}. The two most 
expensive operations are computing $P_{\stdMap}$, calling the basic procedure (Line \ref{alg:main:call_bp}) and computing 
the square root of the inverse of $w_k$ (Line \ref{alg:main:qwk}). 
As discussed in Section \ref{sec:bp}, the cost of computing the projection $P_{\stdMap}$ is no more than $\mathcal{O}(m^3) + \mathcal{O}(m^2d)$.

Furthermore, when 
$\stdCone _k$ is $\Re _+, \SOC{d_k}$ or $\PSDcone{r_k}$, we need respectively no 
more than $\mathcal{O}(1), \mathcal{O}(d_k), \mathcal{O}(r_k^3) = \mathcal{O}(d_k^{3/2})$ for approximating the inverse of the square root of $w_k$. 
So, it seems that in most cases of interest it will cost no more than $\mathcal{O}(d_k^{3/2})$ to 
perform Line \ref{alg:main:qwk}. As we might need to compute $w_k^{-1/2}$ for all blocks, this 
will cost, in total, no more than $\mathcal{O}(d^{3/2})$.

Therefore, the cost of the basic procedure dominates the cost of 
computing $w_k^{-1/2}$, see \eqref{eq:bp:cost}. In total, we 
get 
$$
\mathcal{O}\left(\frac{r}{\varphi(2)}\log \left(\frac{1}{\epsilon}\right) ( m^3+m^2d  +\ell^3 r_{\max}^2(\max(md,c_{\text{min}})))\right),
$$
where $c_{\text{min}}$ is the cost of computing the minimum eigenvalue of an 
element $x \in \stdCone$. 

\section{Final remarks}\label{sec:conc}
In this work, we presented a generalization of Chubanov's algorithm to feasibility problems 
over symmetric cones.
A next interesting step would be to try to extend the algorithm to other families of 
cones. A key aspect of 
our algorithm is being able to shift the hyperplane $H(w,v)\cap \stdCone$ back to 
$H(\stdInt,\stdInt/r)$, with the aid of some appropriately constructed $Q_x$. The reason 
why this can always be done is because $\stdCone$ is a \emph{homogeneous cone}, which means that for every two points $x,y \in \interior \stdCone$ there is an automorphism $Q$ of 
$\stdCone$ such that $Q(x) = y$. It seems likely, although nontrivial, that Chubanov's algorithm can be extended to general 
homogeneous cone in some form or another. However, it less clear whether such an extension is 
possible for non-homogeneous cones. 

From a more practical side, there is a still a lack of computational results for semidefinite 
programming and second order cone programming. It will be interesting to take a look at whether 
Chubanov's algorithm could be competitive with IPMs codes when it comes to feasibility problems.

A significant bottleneck in the algorithm presented here is the computation 
of the projection matrix. Here, we are considering a simple approach where 
the  $P _{\stdMap ^i}$ is computed from scratch every time the 
basic procedure is called. One possible way to address this is using an 
incremental strategy analogous to what Pe\~na and Soheili suggested in 
Section 6 of \cite{PS16}. We plan to address this topic in detail in  future works.

There are also a number of intriguing aspects yet to be elucidated.  First of all, in our analysis, the self-concordance of the function $-\log \det$ plays a 
 very important role in guaranteeing that the reductions in volume are bounded by a constant. 
 This usage indicates that there could be a deeper connection to interior-point 
 methods (IPMs) then what is suggested by previous discussions.

Related to that, one of the referee's suggested that Chubanov's algorithm and our extension could be related to analytic center cutting plane methods, in particular the algorithms described by Sun, Toh and Zhao \cite{STZ02} and by Toh, Zhao and Sun \cite{STZ02,TZS02}, which solve feasibility problems over the positive semidefinite matrices. These methods were partly inspired by earlier work on linear feasibility problems by Luo and Sun \cite{LS00} and Goffin and Vial \cite{GV02}. They were later adapted to 
second order cone programming by Oskoorouchi and Goffin \cite{OG05} and  extended to symmetric cone programming by Basescu and Mitchell \cite{BM08}. 
Here we briefly explain the basic setting and try to explain the similarities and differences between our approach and analytic center methods.

We consider a convex body $\Gamma$  contained in $\stdCone$ that is only accessible by querying some oracle. Given some $x \in \jAlg$ the oracle either tell us that 
$x \in \Gamma$ or provide a cut $y$ such that $\Gamma$ is guaranteed to be contained in 
the half-space $H_1 = \{z \in \jAlg \mid \inProd{z}{y} \leq \inProd{x}{y} \} $.
We also assume that $\Gamma$ is contained in the ``spectral interval'' $\Omega _0 = \{z \in \jAlg \mid z \in \stdCone, \stdInt - z \in \stdCone \}$ and that it contains some 
$\epsilon$-ball. Note that these assumptions imply $\Gamma \cap \interior \stdCone \neq\emptyset$ and that $\Gamma$ is full-dimensional. 
A basic analytic center cutting plane algorithm would proceed by first guessing some initial point $x_0$. Then, the oracle is queried and either $x_0$ is feasible or a cut is returned. If a cut is returned, then we let $x_1$ be the analytic center of $\Omega_1 \coloneqq \Omega _0 \cap H_0$ or some appropriate approximation. Then, we test  $x_1$ for feasibility and if we receive a cut $y_1$ from the oracle, we let $x_2$ be the analytic center of $\Omega _2 \coloneqq \Omega _0 \cap H_0\cap H_1$ or some approximation. We then repeat until a feasible solution is found or some other stopping criterion is met. 
Here, we recall that the analytic center is the minimizer of a function that is constructed using the barrier $-\log \det(\cdot)$, see Section 2 in \cite{STZ02}.

As for the similarities and differences, first of all, it seems that the setting is slightly different. For example, neither the feasible region of \eqref{eq:feas_p} nor \eqref{eq:feas_p_s} can be expected to contain some $\epsilon$-ball. 
Nevertheless, the basic procedure could be seen as an oracle 
for the feasible region of  \eqref{eq:feas_p_s}, in the sense that it returns a cut $y$ when the procedure is not able to prove or disprove the feasibility of \eqref{eq:feas_p_s}. This cut can be used to define half-spaces that contain some of the blocks of the feasible region of \eqref{eq:feas_p_s}, see item $(ii)$ of Lemma \ref{lemma:y_proj2}.
However, the basic procedure is more than a simple oracle and proactively tries to improve the point received as input.
Then, after $y$ is found we try to improve it by decreasing a function constructed around $-\log \det(\cdot)$, as in Theorem \ref{theo:vol2}.
It might be possible that the update in Theorem \ref{theo:vol2} could be seen as trying to approximate the analytic center of some set that contains the feasible region of 
\eqref{eq:feas_p_s}, but, at this moment, it is not clear to us the appropriate way of establishing such a connection.
Furthermore, the step where $\stdMap^i$ is updated to $\stdMap^iQ^i$ (Line \ref{alg:main:update_a} in Algorithm \ref{alg:main}) does not seem to have a clear counterpart in analytic center methods.
Nevertheless, we think it could be an interesting topic of future research to take a closer look at the relationship between the methods.

\small{
\section*{Acknowledgements}
We thank the referees for their helpful and insightful comments, which helped to improve the paper. 	
This article benefited from an e-mail discussion with Prof. Javier Pe\~{n}a, which helped clarify some points regarding \cite{PS16}. 
T.\@ Kitahara is supported by Grant-in-Aid for Young Scientists (B) 15K15941.
M.\@ Muramatsu and T.\@ Tsuchiya are supported in part with Grant-in-Aid for Scientific Research (B)24310112 and 
(C) 26330025. M.\@ Muramatsu is  also partially supported by the
Grant-in-Aid for Scientific Research (B)26280005. T.\@ Tsuchiya is also partially supported by the Grant-in-Aid for Scientific Research (B)15H02968.
}
\bibliographystyle{abbrvurl}
\bibliography{bib_plain}

\appendix
\section{The case of semidefinite programming}\label{app:sdp}
For ease of reference, we ``translate'' here  Algorithms  
\ref{alg:basic} and \ref{alg:main} for the case of semidefinite 
programming, see Algorithms \ref{alg:basic:sdp} and \ref{alg:main:sdp}. In this case, we have $\ell = 1, r = n$, $\stdCone = \PSDcone{n}$ and 
$\jAlg$ is the space of $n\times n$ symmetric matrices $\S^n$.
We recall that if $y \in \PSDcone{n}$, then $\norm{y}_1 = \inProd{\stdInt}{y} = \tr y$, so 
the stopping criteria in Algorithm \ref{alg:basic} becomes 
$\norm{z} \leq \frac{1}{2n}\tr y$. We will use $I_n$ to denote the $n\times n$ identity matrix. We will write $y \succeq 0$, if $y$ is positive semidefinite and $y \succ 0$, if $y$ is positive definite.

Furthermore, for $w \in \PSDcone{n}$, the quadratic map $Q_{w}$ is 
the function that takes $x \in \S^n$ to $wxw$. We do not need, in fact, to explicitly 
construct the operator $Q_{w}$. In particular, if $\stdMap x = 0$ is represented as the set of solutions of $m$ linear equalities $\tr(a_1x) = 0, \ldots, \tr(a_mx) = 0$, the first assignment in Line \ref{alg:main:sdp:last} of Algorithm \ref{alg:main:sdp} is the same 
as substituting every $a_i$ by $n(w^{-1/2}a_iw^{-1/2})$.
Finally, as remarked at the end of Section \ref{sec:bp}, the condition 
$z \neq 0$ in Algorithm \ref{alg:basic:sdp} can be substituted by 
$y^i - z \not \succeq 0$.

{
\begin{algorithm}[H]
\small
	\caption{Basic Procedure - SDP }\label{alg:basic:sdp}
	\KwIn{$\stdMap$, $P_{\stdMap}$, $y^1 \succ 0$ such that $\tr y^1= 1$}
	\KwOut{
		\begin{enumerate}
			\item $y \succeq 0$ such that $\norm{P_\stdMap  y} \leq \frac{1}{2n}\norm{y}_{{1}}$, or
			\item a solution to \eqref{eq:feas_p}, or
			\item a solution to \eqref{eq:feas_d}.
		\end{enumerate}}
		
		$ i \leftarrow 1$, $z \leftarrow P_\stdMap  y^1$. \label{alg:basic:sdp:proj}\\
		\While{$z \neq 0$ and $z \not \succ 0 $ and $\norm{z} > \frac{1}{2n}\tr y^i$}{
			Let $v$ be an eigenvector associated to the minimum eigenvalue of $z$ and such that $\norm{v} = 1$. Let $c \leftarrow vv^\T$. \label{alg:basic:sdp:ide}\\
			$p \leftarrow P_\stdMap c$.\\
			\eIf{$p = 0$ or $p \succ 0$ or $\norm{p} \leq \frac{1}{2n} $}
			{\textbf{stop} and \textbf{return} $c$.}
			{$y^{i+1} \leftarrow \alpha y^i + (1-\alpha) c$, where $\alpha=\frac{\inProd{p}{p - z}}{\norm{z-p}^2}$. \\}
			Set $i \leftarrow i+1$ and $z \leftarrow P_\stdMap  y^i$.
		}
		\textbf{return} $y^i$.

	\end{algorithm}

\begin{algorithm}
\small
	\caption{Main Algorithm - SDP}\label{alg:main:sdp}
	\KwIn{$\stdMap$, $\PSDcone{n}$, $\epsilon$}
	\KwOut{\begin{enumerate}
			\item a solution to \eqref{eq:feas_p}, or
			\item a solution to \eqref{eq:feas_d}, or
			\item ``there is no $\epsilon$-feasible solution''.
		\end{enumerate}}
	$\stdMap ^1 \leftarrow \stdMap$, $i \leftarrow 1$ and 
	$\tilde \epsilon \leftarrow 0$.\\
	Compute $P_{\stdMap}$ and call the basic procedure (Algorithm \ref{alg:basic:sdp}) with $\stdMap^i, P_{\stdMap}, \frac{I_n}{n}$ and 
	denote its output by $y$. \label{alg:main:sdp:call_bp}\\
	\If{$y$ is feasible for \eqref{eq:feas_d} or \eqref{eq:feas_p}}{\textbf{stop} and \textbf{return} $y$.\label{alg:main:sdp:stop1}}
	$\rho  \leftarrow \frac{\tr y}{n\norm{P_\stdMap  y}}$. \label{alg:main:sdp:rho}\\
	$\beta \leftarrow n- \left(\frac{1}{\rho} - \frac{1}{\sqrt{n(3n - 2)}}\right)$.\\
	$w \leftarrow  \left(\frac{n-\beta}{\tr y} \right) \rho n y + \beta I_n$.\label{alg:main:sdp:wk}\\
	$\tilde \epsilon  \leftarrow \tilde \epsilon + \log n -  \frac{1}{n}\log {\det w} $.\\
	\If{$\tilde \epsilon  < \log n + \log \epsilon$ \label{alg:main:sdp:stop}}{\textbf{stop}, there is no $\epsilon$-feasible solution. }
	$\stdMap^{i+1} \leftarrow \stdMap^i Q_{w^{-1/2}\sqrt{n}},$ $i\leftarrow i+1$. Go to Line \ref{alg:main:sdp:call_bp}. \label{alg:main:sdp:last}
\end{algorithm}
}

\end{document}